\newcommand{\cen}{\mathrm{cen}}
\newcommand{\col}{\mathrm{col}}
\newcommand{\e}{\mathrm{e}}
\newcommand{\He}{ H^{\gamma, \varepsilon}}
\def\eps{\varepsilon}
\newcommand{\x}{\mathbf{x}}
\newcommand{\tb}{\mathbf{t}}
\newcommand{\kb}{\mathbf{k}}
\newcommand{\lb}{\mathbf{\ell}}
\newcommand*\lbb{\ensuremath{\boldsymbol\ell}}
\newcommand{\ve}{\text{vec}}
\newtheorem{remark}{Remark}
\renewcommand{\epsilon}{\varepsilon}
\numberwithin{remark}{section}
\title[Stable evaluation of Gaussian RBF using Hermite polynomials]{Stable evaluation of Gaussian radial basis functions using Hermite polynomials}
\author{by Anna Yurova$^{1,2}$ and Katharina Kormann$^{1,2}$}
\address{$^1$Max-Planck-Institut f\"ur Plasmaphysik, 85748 Garching, Germany(\url{anna.yurova@ipp.mpg.de}, \url{katharina.kormann@ipp.mpg.de}).}
\address{$^2$Technische Universitat M\"unchen, Zentrum Mathematik, 85748 Garching, Germany} 
\date{\today}
\thanks{\textbf{Funding:} This work has been carried out within the framework of the EUROfusion Consortium and has received funding from the Euratom research and training programme 2014-2018 under grant agreement No 633053. The views and opinions expressed herein do not necessarily reflect those of the European Commission.}
\def\eps{\varepsilon}
\newtheorem{theorem}{Theorem}
\newtheorem{lemma}{Lemma}
\numberwithin{theorem}{section}
\numberwithin{lemma}{section}
\begin{document}
\maketitle

\begin{abstract}
 Gaussian radial basis functions can be an accurate basis for multivariate interpolation. In practise, high accuracies are often achieved in the flat limit where the interpolation matrix becomes increasingly ill-conditioned. Stable evaluation algorithms have been proposed by Fornberg, Larsson \& Flyer based on a Chebyshev expansion of the Gaussian basis and by Fasshauer \& McCourt based on a Mercer expansion with Hermite polynomials. In this paper, we propose another stabilization algorithm based on Hermite polynomials but derived from the generating function of Hermite polynomials. The new expansion does not require a complicated choice of parameters and offers a simple extension to high-dimensional tensor grids as well as a generalization for anisotropic multivariate basis functions using the Hagedorn generating function.
\end{abstract}

\section{Introduction}

Multivariate interpolation is a topic of recent interest, for instance appearing in the semi-Lagrangian solution of high-dimensional advection problems. Gaussian radial basis function interpolation generalizes to higher dimensions in a simple way and can yield spectral accuracy \cite{fasshauer2012}. However, it is known that rather small values of the \emph{shape parameter} (width of the Gaussian) are often required for optimal accuracy. In this case the basis functions become increasingly flat and the interpolation matrix becomes ill-conditioned. Tarwater has described this phenomenon in 1985~\cite{tarwater1985} and the problem has been extensively studied in the literature (see \cite{fornberg_flyer_2015} for a review). The eigenvalues of the interpolation matrix are proportional to increasing powers of the shape parameter as has been quantified by Fornberg and Zuev \cite{fornberg2007}.

A direct collocation solution of the interpolation problem, referred to as RBF-Direct in the literature, computes the expansion coefficients of the Gaussian interpolant by inverting the collocation matrix and then evaluating the expansion. This procedure suffers from inaccuracies in floating point arithmetics due to the ill-conditioning of the matrices. In recent years, several algorithms have been proposed to stabilize the computations of the radial basis functions interpolation problems.  These stabilization algorithms directly evaluate the interpolant in a sequence of well-conditioned steps by a transformation to a different basis. 
The first method was the Contour--Pad\'e approximation proposed by Fornberg and Wright  for multiquadrics \cite{fornberg2004}. Later Fornberg and Piret \cite{fornberg2007stable} proposed the so-called RBF-QR method for stable interpolation with Gaussians on the sphere. The Gaussian basis is expanded in spherical harmonics. The expansion allows to isolate the ill-conditioning in a diagonal matrix that can be inverted in a well-conditioned procedure.

The method has been extended to more general domains in one to three dimensions by Fornberg, Larsson \& Flyer \cite{fornberg2011stable}. This expansion is based on a combination of Chebyshev polynomials and spherical harmonics. This method will be referred to as Chebyshev-QR in this paper. The technique has also been used for the stable computation of difference matrices by Larsson et al.~\cite{larsson2013} and by Fornberg et al.~\cite{fornberg2016} for RBF-FD stencils. In order to treat complex domains, the Chebyshev-QR method has been combined with a partition of unity approach by Larsson, Shcherbakov \& Heryudono \cite{larsson2017}. 

Fasshauer and McCourt \cite{fasshauer2012stable} have developed another RBF-QR method, called Gauss-QR, that relies on a Mercer expansion of the Gaussian kernel. The basis transformation involves exponentially scaled Hermite polynomials. Compared to the Chebyshev-QR method by Fornberg, Larsson \& Flyer \cite{fornberg2011stable}, the Gauss-QR algorithm extends to higher dimensions in a simpler way and does not require transformation of the computational domain into the unit squere. On the other hand, the method introduces an additional parameter that needs to be hand-tuned. In this paper, we propose an expansion built on Hermite generating functions. Our new basis is similar to the one in  \cite{fasshauer2012stable} with the difference that the introduced parameter can easily be chosen. Our focus is on enabling high-dimensional interpolation where we propose a tensor product approach that yields a memory-sparse representation of the interpolation matrices. 
Moreover, we propose a stabilization algorithm for anisotropic multivariate Gaussians: We adopt the framework of Hagedorn wave packets from the semi-classical quantum dynamics literature \cite{lubich2008quantum, hagedorn1998raising}. Hagedorn wave packets are combinations of multivariate versions of Hermite polynomials and anisotropic Gaussians. Analogously to the Hermite polynomials, Hagedorn generating functions can be considered (see \cite{dietert2017invariant,hagedorn2015generating}) that enable a generalization of our Hermite expansion to the anisotropic case.

The paper is organized as follows: In the next section we introduce our HermiteGF expansion of the radial basis functions and discuss its convergence. In \cref{sec:stabilization}, we discuss two main ideas of truncating the expansion: one based on a direct transform to the HermiteGF basis and another following the RBF-QR idea. Extentions to multivariate interpolation are discussed in \cref{sec:multivariate}. Numerical results show the accuracy of our method in \cref{sec:numericalResults} and computational complexity and performance are discussed in \cref{sec:performance}. Finally, \cref{sec:conclusions} concludes the paper.

\section{HermiteGF expansion}
\label{sec:directInterpHermite}

In this section, similarly to \cite{fornberg2007stable,fornberg2011stable,fasshauer2012stable}, we propose an expansion of the radial basis functions in a ``better'' basis, that spans the same space, but avoids instabilities related to the flat limit.

\subsection{Interpolation problem}
Before introducing our expansion of the Gaussian basis, let us briefly define the interpolation problem in one dimension.
\label{sec:interpolationProblem}
Given a set $\{\phi_k(x)\}_{k=1}^N$ of basis functions and the values $\{f_i\}$ of the function $f$ at points $\{x_i^{\mathrm{col}}\}_{i=1}^N$ we seek to find an interpolant of the following form,
\begin{equation}
 s(x) = \sum_{k = 1}^N \alpha_k \phi_k(x), 
 \label{eq:RBFinterpolation}
\end{equation}
such that it satisfies the $N$ collocation conditions,
\begin{equation}
 s(x_i^{\mathrm{col}}) = f_i \quad \text{for} \quad i=1,\ldots, N.
\end{equation}
The straightforward approach is to find the coefficients $\{\alpha_i\}$ as a solution of the linear system,
\begin{equation}
 \Phi^{\mathrm{col}} \alpha = f, \quad \text{with}\quad \Phi_{ij}^{\mathrm{col}} = \phi_j(x_i^{\mathrm{col}}).
 \label{eq:interpolationProblem}
\end{equation}
The matrix $\Phi^{\mathrm{col}}$ is called \emph{collocation matrix}. Then, the interpolant \cref{eq:RBFinterpolation} can be evaluated at any point of the domain.

Here we focus on Gaussian radial basis functions,
\begin{equation}
\phi_k(x) = \exp(-\epsilon^2 \Vert x - x_k^{\cen} \Vert ^2), 
\end{equation}
with shape parameter  $\epsilon > 0$. 

\subsection{Definition}
Let $\{h_n\}_{n \geq 0}$ be the Hermite polynomials in the physicists' version, that satisfy the following recurrence relation,
\begin{equation}
 h_{n+1}(x) = 2xh_n(x) -2nh_{n-1}(x).
\end{equation}
The following upper bound holds for the magnitude of Hermite polynomials \cite[Expression 22.14.17]{abramowitz1964handbook},
\begin{equation}
 \vert h_n(x) \vert \leq e^{\frac{x^2}{2}}c2^{\frac{n}{2}}\sqrt{n!}, \quad c \approx 1.086435.
 \label{eq:HermiteUpperBound}
\end{equation}
The factors $\sqrt{n!}$, $2^{n/2}$ grow very fast with $n$. Therefore, in order to avoid overflow for large $n$, it is advantageous for numerical computations to scale the Hermite polynomials with the factor $\sqrt{2^n n!}$.
Hence, let us define the following basis functions,
\begin{equation}
\He_n(x) = \frac{1}{\sqrt{2^n n!}}h_n(\gamma x)\e^{-\varepsilon^2 x^2}, \quad \varepsilon>0, \, \gamma>0,
\end{equation}
that we refer to as \emph{HermiteGF functions}.
Based on the generating function theory we derive an infinite expansion of the one dimensional Gaussian RBFs in the new HermiteGF basis $\{\He_n\}$.

\begin{theorem}{HermiteGF expansion}

 \noindent
For all $\varepsilon > 0$, $\gamma > 0$, $y \in \mathbb{R}$, we have a pointwise expansion
\begin{equation}
\phi_y(x) = \e^{-\varepsilon^2\left(x-y\right)^2} = \exp\left(\varepsilon^2 y^2\left(\frac{\varepsilon^2}{\gamma^2} - 1\right)\right) \sum_{n \geq 0} \frac{\varepsilon^{2n}\sqrt{2^n}}{\gamma^n\sqrt{n!}}y^n \He_n(x).
\label{eq:RBFhermiteExpansionScaled}
\end{equation}
The RBF interpolant $s(x)$ can then be pointwise computed as,
\begin{equation}
 s(x) = \sum_{k = 1}^N \alpha_k\exp\left(\varepsilon^2 (x^{\cen}_k)^2\left(\frac{\varepsilon^2}{\gamma^2} - 1\right)\right)  \sum_{n \geq 0}  \frac{\varepsilon^{2n} \sqrt{2^n}}{\gamma^n \sqrt{n!}}(x^{\cen}_k)^n \He_n(x),
 \label{eq:interpolantExpansion}
\end{equation}
where $\{x_k^{\cen}\}_{k=1}^N$ are the centers of the RBFs.
\label{th:HermiteGF_expansion}
\end{theorem}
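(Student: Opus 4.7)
The plan is to obtain the expansion directly from the standard generating function identity for the physicists' Hermite polynomials,
\begin{equation*}
e^{2\xi t - t^2} = \sum_{n \geq 0} \frac{h_n(\xi)}{n!}\, t^n,
\end{equation*}
which converges pointwise (in fact absolutely and locally uniformly) for every $\xi, t \in \mathbb{R}$ since the right-hand side defines an entire function of $t$.

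First I would split the Gaussian as
\begin{equation*}
\e^{-\varepsilon^2(x-y)^2} = \e^{-\varepsilon^2 x^2}\, \e^{2\varepsilon^2 xy - \varepsilon^2 y^2}
\end{equation*}
so that the $e^{-\varepsilon^2 x^2}$ factor matches the one already present in $\He_n(x)$. Then I would choose $\xi = \gamma x$ and $t = \varepsilon^2 y/\gamma$ in the generating function, which produces the mixed term $2\gamma x t = 2\varepsilon^2 xy$ that I need, at the cost of a discrepancy $t^2 = \varepsilon^4 y^2/\gamma^2$ rather than the $\varepsilon^2 y^2$ appearing above. The correction factor is exactly $\exp(\varepsilon^2 y^2(\varepsilon^2/\gamma^2 - 1))$, which is the prefactor in the statement.

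Substituting and using $h_n(\gamma x)\e^{-\varepsilon^2 x^2} = \sqrt{2^n n!}\, \He_n(x)$ converts $h_n(\gamma x)/n!$ into $\sqrt{2^n}/\sqrt{n!}$ times $\He_n(x)$, and the $t^n$ factor becomes $(\varepsilon^2 y/\gamma)^n = \varepsilon^{2n} y^n/\gamma^n$. Collecting terms gives exactly \eqref{eq:RBFhermiteExpansionScaled}. Pointwise convergence is inherited from the generating function identity (no rearrangement is needed, since the identity is applied at fixed $x, y$). The interpolant formula \eqref{eq:interpolantExpansion} then follows immediately by linearity, applying the pointwise expansion term-by-term to each $\phi_{x_k^{\cen}}(x)$ in $s(x) = \sum_k \alpha_k \phi_{x_k^{\cen}}(x)$.

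I do not expect any substantive obstacle here: the derivation is essentially a matching-of-coefficients argument against the Hermite generating function, and the only care needed is in bookkeeping the scaling $1/\sqrt{2^n n!}$ used to define $\He_n$ and the $\gamma$-dilation of the argument inside $h_n$. If a more quantitative convergence statement were desired (e.g., uniform on compact sets, or in $L^2$ with a Gaussian weight), one could bound the tail using \eqref{eq:HermiteUpperBound}, but for the pointwise claim as stated this is not required.
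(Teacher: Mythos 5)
Your proposal is correct and follows essentially the same route as the paper's own proof: both apply the physicists' Hermite generating function with the substitution $s=\gamma x$, $t=\varepsilon^2 y/\gamma$, absorb $h_n(\gamma x)\e^{-\varepsilon^2 x^2}=\sqrt{2^n n!}\,\He_n(x)$, and account for the mismatch between $t^2=\varepsilon^4 y^2/\gamma^2$ and $\varepsilon^2 y^2$ via the prefactor $\exp(\varepsilon^2 y^2(\varepsilon^2/\gamma^2-1))$. The only cosmetic difference is that you work forward from the split Gaussian while the paper verifies the identity by collecting exponents on the right-hand side; the bookkeeping is identical.
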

\begin{proof}
The Hermite polynomial's generating function is given by (see e.g.~\cite[Expression 22.9.17]{abramowitz1964handbook}),
 \begin{equation}
  \mathrm{e}^{2st - t^2} = \sum_{n \geq 0} \frac{t^n}{n!} h_n(s) 
 \end{equation}
Choosing $t = \frac{\varepsilon^2 y}{\gamma}$ and $s = \gamma x$, we obtain
\begin{equation}
 \sum_{n \geq 0} \frac{\varepsilon^{2n}}{\gamma^n n!}y^nh_n(\gamma x) = \exp\left(2\varepsilon^2y x - \frac{\varepsilon^4 y^2}{\gamma^2}\right).
\end{equation}
Hence, we get
\begin{align}
&\phantom{=} \exp\left(\varepsilon^2 y^2\left(\frac{\varepsilon^2}{\gamma^2}-1\right)\right) \sum_{n \geq 0}\frac{\varepsilon^{2n} \sqrt{2^n}}{\gamma^n \sqrt{n!}}y^n\He_n(x) \\
 &= \exp\left(\varepsilon^2 y^2\left(\frac{\varepsilon^2}{\gamma^2}-1\right)\right) \sum_{n \geq 0}\frac{\varepsilon^{2n} }{\gamma^n n!}y^nh_n(\gamma x)\mathrm{e}^{-\varepsilon^2x^2} \\
 &= \exp\left(\varepsilon^2y^2\left(\frac{\varepsilon^2}{\gamma^2}-1\right) + 2\varepsilon^2y x - \frac{\varepsilon^4y^2}{\gamma^2} - \varepsilon^2x^2\right) = \mathrm{e}^{-\varepsilon^2(x - y)^2},
\end{align}
which proves expansion \eqref{eq:RBFhermiteExpansionScaled}. Using expansion \eqref{eq:RBFhermiteExpansionScaled} in the interpolant \eqref{eq:RBFinterpolation}, we get the representation \eqref{eq:interpolantExpansion}.
\end{proof} 

\subsection{Basis centering}
The Hermite polynomials are symmetric with respect to the axis $x=0$. Due to the growth in the basis it is advantageous to center the interpolation interval $[A, B]$ at 0. 
For this reason, we symmetrize the  basis around $x_0 := \frac{A+B}{2}$. The RBF $\phi_k(x)$ can be expanded as,
\begin{align}
 \phi_k(x) &= \mathrm{e}^{-\varepsilon^2(x-x_k^{\cen})^2} = \mathrm{e}^{-\varepsilon^2(x - x_0 - (x_k^{\cen} - x_0))^2} \\
 &=  \mathrm{e}^{\left(\varepsilon^2 (x_k^{\cen}-x_0)^2\left(\frac{\varepsilon^2}{\gamma^2} - 1\right)\right)} \sum_{n \geq 0} \frac{\varepsilon^{2n}\sqrt{2^n}}{\gamma^n\sqrt{n!}}(x_k^{\cen}-x_0)^n \He_n(x-x_0).
\end{align}

Then, we have,
\begin{equation}
 x-x_0 \in \left[ - \frac{B-A}{2}, \frac{B-A}{2} \right],
\end{equation}
i.e.~the HermiteGF functions $\He_n$ are evaluated on an interval centered around 0. 
For the sake of simplicity, we further consider symmetric intervals $[-L, L]$. However, the procedure can be applied to functions on arbitrary intervals by adding this translation by $x_0$.
 \subsection{The parameter $\gamma$}
 The parameter $\gamma$ in the basis $\{\He_n\}$ allows a control over the evaluation domain of the Hermite polynomials. When choosing $\gamma$, one has to consider two counteracting effects: For small values of $\gamma$, the collocation points are close which can yield ill-conditioning since the values of the basis functions at the collocation points are too similar. On the other hand, Hermite polynomials take very large values on large domains which can lead to an overflow. An optimal balance depends on the particular function and the number of basis functions. However, from our numerical experience, choosing $\gamma L$ between 3 and 5 yields good approximation quality in most cases (cf.~\cref{sec:scalAndCond}). 
 
\subsection{Connection to Fasshauer and McCourt}

An expansion of similar type was used by Fasshauer and McCourt \cite{fasshauer2012stable} for the stabilization of the RBF interpolation. Instead of the HermiteGF-expansion, an eigenfunction expansion of Gaussian RBF was used. The corresponding eigenfunctions look as follows \cite[$\mathsection$ 3.1]{fasshauer2012stable},
\begin{equation}
 \phi_n(x) = \frac{\sqrt{\beta}}{\sqrt{2^nn!}} \exp(-\delta^2 x^2) h_{n-1}(\alpha \beta x),
\end{equation}
The parameter $\alpha$ needs to be chosen by the user. Then, the parameters $\beta$ and $\delta$ are deduced from $\alpha$ and $\eps$ according to the formula,
\begin{equation}
\beta = \left( 1 + \frac{4\varepsilon^2}{\alpha^2}\right)^{1/4}, \quad \delta^2 = \frac{\alpha^2}{2}(\beta^2 - 1).
\label{eq:fasshauerParameterRelations}
\end{equation}
We now try to match that basis with the basis functions arising from the HermiteGF expansion. To match the width of the exponential in the two expansions we need,
\begin{equation}\label{eq:fasshauer_match_delta}
 \delta = \varepsilon
\end{equation}
and to match the argument of the Hermite polynomials it is necessary to have,
\begin{equation}
 \alpha \beta = \gamma.
 \label{eq:alphaBetaGamma}
\end{equation}
We now compute the values of the parameters $\alpha, \beta$ from the relations \eqref{eq:fasshauerParameterRelations},
\begin{equation}
 \varepsilon^2 = \frac{\gamma^2 - \alpha^2}{2} \implies \alpha = \sqrt{\gamma^2 - 2\varepsilon^2}.
\end{equation}
The parameter $\beta$ can then be calculated as 
\begin{equation}
\beta = \left( 1 + \frac{4\varepsilon^2}{\alpha^2}\right)^{1/4} = \left( 1 + \frac{4\varepsilon^2}{\gamma^2 - 2\varepsilon^2}\right)^{1/4}
\label{eq:beta1}
\end{equation}
However, from the relation \eqref{eq:alphaBetaGamma} $\beta$ must be,
\begin{equation}
 \beta = \frac{\gamma}{\alpha} = \frac{\gamma}{\sqrt{\gamma^2 -2\varepsilon^2}}
 \label{eq:beta2}
\end{equation}
One can see that if $\varepsilon \rightarrow 0$, both expressions for $\beta$ converge to $1$. However in a general case the values of expressions \eqref{eq:beta1} and \eqref{eq:beta2} for the parameter $\beta$ differ. Hence, we cannot match both \eqref{eq:fasshauer_match_delta} and \eqref{eq:alphaBetaGamma} at the same time. This means that there is no direct correspondence between the basis functions arising from the HermiteGF expansion and the ones used by Fasshauer and McCourt \cite{fasshauer2012stable}. 

\subsection{Convergence of the truncated HermiteGF expansion}
In this section, we check the convergence of the expansion \eqref{eq:interpolantExpansion}, 
if we cut the expansion \eqref{eq:interpolantExpansion} after $M$ terms,
\begin{equation}
 s(x) \approx s_M^{\gamma}(x) := \sum_{k=1}^N \alpha_k \exp\left(\varepsilon^2 (x^{\cen}_k)^2\left(\frac{\varepsilon^2}{\gamma^2} - 1\right)\right) \sum_{n=0}^{M-1} \frac{\varepsilon^{2n} \sqrt{2^n}}{\gamma^n \sqrt{n!}}(x_k^{\cen})^n \He_n(x).
 \label{eq:truncHermiteInterp}
\end{equation}
We later refer to $s_M^{\gamma}(x)$ as \emph{HermiteGF interpolant}. Let us now prove that for a large enough $M$ the approximation $s_M^{\gamma}(x)$ converges to $s(x)$.
\begin{theorem}

Let $s$ be the RBF interpolant,
\begin{equation}
 s(x) = \sum_{k=1}^N \alpha_k \phi_k(x) = \sum_{k=1}^N \alpha_k \e^{-\eps^2(x - x_k^{\cen})^2}
\end{equation}
with $\{x_k^{\cen}\}_{k=1}^N \subset [-L, L]$.

For all $x \in [-L, L]$, the HermiteGF interpolant $s_M^{\gamma}(x)$ given by \eqref{eq:truncHermiteInterp}  converges pointwise to $s(x)$, i.e.
\begin{equation}
 \vert s(x) - s_M^{\gamma}(x) \vert \rightarrow 0 \quad \text{for} \quad M \rightarrow \infty
\end{equation}
For $\gamma > \sqrt{2}\eps^2 L$, we also have the estimate
\begin{equation}
 \vert s(x) - s_M^{\gamma}(x) \vert < C \frac{q^M}{(1-q)\sqrt{M!}},
 \label{eq:geomEstimation}
\end{equation}
where $q = \frac{\sqrt{2}\eps^2 L}{\gamma}$ and $C = C(\gamma, \eps, L, \{\alpha_k\}) \in \mathbb{R}$ is a constant.
\end{theorem}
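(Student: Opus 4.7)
The plan is to start from the pointwise expansion \eqref{eq:interpolantExpansion} established in \cref{th:HermiteGF_expansion} and write the error as the tail
\begin{equation*}
s(x) - s_M^{\gamma}(x) = \sum_{k=1}^N \alpha_k \exp\left(\varepsilon^2 (x_k^{\cen})^2\left(\tfrac{\varepsilon^2}{\gamma^2} - 1\right)\right) \sum_{n \geq M} \frac{\varepsilon^{2n} \sqrt{2^n}}{\gamma^n \sqrt{n!}}(x_k^{\cen})^n \He_n(x),
\end{equation*}
so that the task reduces to controlling the inner infinite tail uniformly in $k$ and in $x \in [-L, L]$.

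Next I would apply the Hermite upper bound \eqref{eq:HermiteUpperBound} to $h_n(\gamma x)$ and cancel the $\sqrt{n!}\, 2^{n/2}$ factor against the normalisation $1/\sqrt{2^n n!}$ in the definition of $\He_n$. This produces an $n$-independent pointwise bound
\begin{equation*}
|\He_n(x)| \leq c\, e^{(\gamma^2/2 - \varepsilon^2) x^2},
\end{equation*}
which on $[-L, L]$ is majorised by a constant $C_1 = C_1(\gamma, \varepsilon, L)$. Combined with $|x_k^{\cen}| \leq L$, this collapses the $n$-dependence of each tail term to $(\sqrt{2}\varepsilon^2 L/\gamma)^n/\sqrt{n!} = q^n/\sqrt{n!}$, giving
\begin{equation*}
|s(x) - s_M^{\gamma}(x)| \leq C \sum_{n \geq M} \frac{q^n}{\sqrt{n!}},
\end{equation*}
where $C$ absorbs $C_1$, the exponential prefactor (bounded since $|x_k^{\cen}| \leq L$), and $\sum_k |\alpha_k|$.

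It then only remains to estimate the numerical tail $\sum_{n \geq M} q^n/\sqrt{n!}$. For pointwise convergence at arbitrary $\gamma > 0$, the ratio test shows that $\sum_{n \geq 0} q^n/\sqrt{n!}$ converges for every fixed $q$ because $\sqrt{n!}$ eventually dominates any geometric growth, so its tail vanishes as $M \to \infty$. To obtain the quantitative estimate \eqref{eq:geomEstimation} under the assumption $q < 1$, i.e.\ $\gamma > \sqrt{2}\varepsilon^2 L$, I would use the monotonicity $\sqrt{n!} \geq \sqrt{M!}$ for $n \geq M$ to factor $1/\sqrt{M!}$ out of the tail and then sum the resulting geometric series $\sum_{n \geq M} q^n = q^M/(1-q)$.

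The whole argument is essentially one tail-bound calculation, so there is no substantial obstacle; the only point where care is needed is identifying $C_1$, since the sign of $\gamma^2/2 - \varepsilon^2$ determines whether the supremum of $|\He_n|$ on $[-L,L]$ is attained at the endpoints or at the origin. Either case is absorbed into the constant $C$, and no delicate cancellation or asymptotic analysis is required beyond the elementary Hermite bound \eqref{eq:HermiteUpperBound}.
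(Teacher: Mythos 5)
Your proposal is correct and follows essentially the same route as the paper's proof: tail of the expansion from \cref{th:HermiteGF_expansion}, the Hermite bound \eqref{eq:HermiteUpperBound} cancelling the $\sqrt{2^n n!}$ normalisation to give an $n$-independent bound on $\He_n$ over $[-L,L]$, the ratio test for the qualitative convergence, and $\sqrt{n!}\geq\sqrt{M!}$ plus a geometric series for \eqref{eq:geomEstimation}. Your remark about the sign of $\gamma^2/2-\varepsilon^2$ is exactly what the paper handles with its constants $P_1,P_2$.
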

\begin{proof}
We construct the proof analogously to \cite[$\mathsection$ 3.1]{rashidinia2016stable}. Combining \eqref{eq:interpolantExpansion} and \eqref{eq:truncHermiteInterp}, for each $x$ we have,
\begin{equation}
 \vert s(x) - s_M(x) \vert = \left\vert\sum_{k=1}^N\alpha_k \sum_{n=M}^{\infty}\exp\left(\varepsilon^2 (x^{\cen}_k)^2\left(\frac{\varepsilon^2}{\gamma^2} - 1\right)\right)\frac{\varepsilon^{2n}}{\gamma^n n!} (x^{\cen}_k)^n h_n(\gamma x)\mathrm{e}^{-\varepsilon^2 x^2}\right\vert.
\end{equation}
Denoting $\mathcal{A} = \mathrm{max}\{\vert \alpha_j \vert, j=1,..., N\}$ and using the upper bound for the $n$-th Hermite polynomial \eqref{eq:HermiteUpperBound} we obtain,
\begin{multline}
 \vert s(x) - s_M^{\gamma}(x) \vert \leq \mathcal{A} \sum_{k=1}^N \sum_{n=M}^{\infty}{\Bigg (}\exp\left(\varepsilon^2 (x^{\cen}_k)^2\left(\frac{\varepsilon^2}{\gamma^2} - 1\right)\right) \frac{\varepsilon^{2n}}{\gamma^n n!} \vert x^{\cen}_k \vert ^n \cdot \\ \cdot \mathrm{e}^{\left(\frac{\gamma^2 }{2} - \eps^2\right)x^2}c 2^{\frac{n}{2}}\sqrt{n!}{\Bigg )}.
\end{multline}
To further estimate this expression, we use that $\vert x_k^{\cen} \vert \leq L$, $k=1,\ldots,N$, and $|x|\leq L$ and introduce the constants,
\begin{equation}
      P_1 =  \max \left\{\exp\left(\left(\frac{\gamma^2}{2} - \varepsilon^2\right) L^2\right), 1\right\}, \quad P_2 = \max \left\{ \exp\left(\varepsilon^2 L^2\left(\frac{\varepsilon^2}{\gamma^2} - 1\right)\right),1 \right\}.
\end{equation}    
Then, we obtain the bound
\begin{align}
\vert s(x) - s_M^{\gamma}(x) \vert &\leq \underbrace{ \vphantom{\sum_{n=1}^{\infty}} c \mathcal{A} N P_1P_2}_{C}  \underbrace{\sum_{n=M}^{\infty} \frac{(\sqrt{2}\varepsilon^2 L)^{n}}{\gamma^n \sqrt{n!}}}_{T_{M}}
\end{align}
Consider the following series of positive terms,
\begin{equation}
  \sum_{n=0}^{\infty} \underbrace{\frac{(\sqrt{2}\varepsilon^2 L)^{n}}{\gamma^n \sqrt{n!}}}_{t_{n}}.
  \label{eq:seriesHermiteEstimation}
\end{equation}
 Then $T_{M}$ is the tail of the series. Therefore it is enough to prove that the series \eqref{eq:seriesHermiteEstimation} converges in order to prove that $T_{M} \rightarrow 0$ \cite[6.11]{binmore1982mathematical}. It can be shown that $ \lim_{n \rightarrow \infty} t_{n+1}/t_n = 0$ and hence, the series $\{t_n\}$ converges by the ratio criterion \cite[6.17]{binmore1982mathematical}. Therefore, 
 \begin{equation}
  \vert s(x) - s_M^{\gamma}(x) \vert \leq C T_{M} \rightarrow 0 \quad \text{for} \quad M \rightarrow \infty,
 \end{equation}
 where $C$ depends only on the size of the interpolation interval $L$, the coefficients $\{\alpha_k\}$ of the RBF interpolant, the number of RBFs $N$, and the parameters $\varepsilon, \gamma$.
 
 For $\gamma > \sqrt{2}\eps^2 L$, $T_M$ can be estimated by 
 \[ T_M < \frac{1}{\sqrt{M!}} \sum_{n=M}^{\infty}q^n\] 
 with $q = \frac{\sqrt 2 \eps^2 L}{\gamma} < 1$.
 Using the geometric series we obtain \eqref{eq:geomEstimation}.
\end{proof}
\begin{remark}
 Analogously, it can be proven that the HermiteGF interpolant $s_M^{\gamma}(x)$ converges to $s(x)$ in $L_2([-L,L])$. Moreover, the geometric bound \eqref{eq:geomEstimation} holds with a different constant for the $L_2([-L,L])$ norm.
\end{remark}
\section{Stabilization of the RBF interpolation}\label{sec:stabilization}
In this section, we derive a numerical stabilization algorithm for RBF interpolation based on the HermiteGF expansion. The main idea is to perform a basis transformation to a more stable basis $\{\He_n\}$. For appropriately chosen parameter $\gamma$ we expect the basis $\{\He_n\}$ to be better conditioned. We can write the expansion \eqref{eq:RBFhermiteExpansionScaled} as an infinite matrix-vector product,
\begin{equation}
 \begin{pmatrix}
  \phi_1(x),&\ldots,&\phi_N(x)
 \end{pmatrix} =
 {\begin{pmatrix}
  \He_0(x), &
  \ldots, &
  \He_M(x), &
  \ldots &
 \end{pmatrix}}
 B(\eps, \gamma, X^{\cen})
 \label{eq:expansionMatrixForm}
\end{equation}
with
\begin{equation}
 B(\eps, \gamma, X^{\cen})_{nk} = \exp\left(\varepsilon^2 (x_k^{\cen})^2\left(\frac{\varepsilon^2}{\gamma^2} - 1\right)\right) \frac{\varepsilon^{2n}\sqrt{2^n}}{\gamma^n \sqrt{n!}}(x_k^{\cen})^n.
\end{equation}

The major part of the ill-conditioning is now confined in the matrix $B$. Since $B$ is independent of the point $x$ where the basis function is evaluated, both the evaluation and interpolation matrix can be expressed in the form \eqref{eq:expansionMatrixForm} with the same matrix $B$. For this reason, a strategy of dealing with the ill-conditioning in $B$ analytically can be developed. 

To make the representation \eqref{eq:expansionMatrixForm} usable for numerical computations, one has to cut the expansion \eqref{eq:RBFhermiteExpansionScaled} after a certain number of terms $M$. This point has to be chosen such that the order of magnitude of the interpolation error is the same order as the error of the RBF interpolant.

We now consider two ways of dealing with the matrix $B$. One way is to eliminate the matrix $B$ from the computation completely by choosing $M=N$ as proposed in \cite[$\mathsection$ 3.1]{rashidinia2016stable}. This case corresponds to an interpolation in the HermiteGF basis. Even though this method provides good results, it lacks the flexibility of choosing $M$. To allow $M>N$, an RBF-QR algorithm can be designed for the HermiteGF expansion analogously to the Chebyshev RBF-QR algorithm by Fornberg et al.~\cite{fornberg2011stable}.
\subsection{HermiteGF interpolant}
Let us write the RBF interpolant $s(x)$ in the matrix-vector form,
\begin{equation}
 s(x) = \sum_{k=1}^N \alpha_k \phi_k(x) = \Phi(x, X^{\mathrm{cen}}) \alpha,
\end{equation}
where $\Phi(x, X^{\mathrm{cen}}) = \begin{pmatrix}
  \phi_1(x),&\ldots,&\phi_N(x)
 \end{pmatrix}$, $X^{\mathrm{cen}}$ are the centering points of the basis functions and $\alpha$ is the coefficients vector. We now use the expansion~\eqref{eq:RBFhermiteExpansionScaled},
\begin{equation}
 s(x) = \Phi(x, X^{\mathrm{cen}}) \alpha \approx \He(x) B(\varepsilon, \gamma,X^{\mathrm{cen}}) \alpha,
 \label{eq:radialInterpolationHermite}
\end{equation}
where $\He =( \He_0(x), \, \ldots, \, \He_{M-1}(x) )$.
The ill-conditioning related to varying powers of $\varepsilon$ is confined in a matrix $B$.

The system \eqref{eq:interpolationProblem} then takes the form,
\begin{equation}
 f(X^{\mathrm{col}}) = \He(X^{\mathrm{col}}) B(\varepsilon,\gamma, X^{\mathrm{cen}}) \alpha,
\end{equation}
where $X^{\mathrm{col}}$ are the collocation points.
Considering $M = N$ we arrive to the following expression for the coefficients $\alpha$,
\begin{equation}
 \alpha = B(\varepsilon,\gamma, X^{\mathrm{cen}})^{-1}\He( X^{\mathrm{col}})^{-1}f(X^{\mathrm{col}}).
 \label{eq:alphaComputationCollocation}
\end{equation}
If we now insert the expression~\eqref{eq:alphaComputationCollocation} into~\eqref{eq:radialInterpolationHermite}, we get,
\begin{align}
 s(x) &\approx s_M^{\gamma} = \He(x)B(\varepsilon,\gamma, X^{\mathrm{cen}}) B(\varepsilon,\gamma, X^{\mathrm{cen}})^{-1}\He(X^{\mathrm{col}})^{-1}f(X^{\mathrm{col}}) \\
 &= \He(x) \He(X^{\mathrm{col}})^{-1}f(X^{\mathrm{col}}).
 \label{eq:directInterp1D}
\end{align}

The only restriction that we put on the collocation points is that their number should be equal to the number of center points. Note that the obtained expression for the interpolant $s$ \emph{does not depend} on the \emph{grid of centers} $X^{\cen}$. This way of computing $s$ is very easy to implement and allows to avoid ill-conditioning arising in $B$. However, it restricts us to $M=N$.
\subsection{RBF-QR}
In case we want to cut the expansion~\eqref{eq:RBFhermiteExpansionScaled} at $M > N$, the interpolation algorithm gets more complicated. Since the matrix $B$ is now rectangular, $B^{-1}$ is not well defined. Therefore, it is necessary to come up with another way of dealing with the ill-conditioning contained in $B$. We follow the RBF-QR approach and further split $B$ into a well-conditioned full matrix $C$ and a diagonal matrix $D$, where all harmful effects are confined in $D$. In the case of expansion~\eqref{eq:RBFhermiteExpansionScaled}, the following setup follows naturally from the Chebyshev-QR theory \cite[$\mathsection$ 4.1.3]{fornberg2011stable},
\begin{align*}
C_{kn} = \exp\left(\epsilon^2 (x_k^{\cen})^2\left(\frac{\epsilon^2}{\gamma^2} - 1\right)\right) (x_k^{\cen})^n, \quad D_{nn} = \frac{\epsilon^{2n}\sqrt{2^n}}{\gamma^n \sqrt{n!}}.
\end{align*}

A problem is arising when we take \emph{center points} with an absolute value greater than 1. That can lead to an ill-conditioning in $C$. One of the ways to treat this effect is to divide each coefficient by the width of the domain $L$ containing the centering points. That might be dangerous when the domain is too large, however, it still extends the range of available domains. The coefficients then look as follows,
\begin{align*}
C_{kn} = \exp\left(\epsilon^2 (x_k^{\cen})^2\left(\frac{\epsilon^2}{\gamma^2} - 1\right)\right) \frac{(x_k^{\cen})^n}{L^n}, \quad D_{nn} = \frac{\epsilon^{2n}\sqrt{2^n}}{\gamma^n \sqrt{n!}} L^n.
\end{align*}

The goal is to find a basis $\{\psi_j\}$ spanning the same space as $\{\phi_k\}$ but yielding a better conditioned collocation matrix. In particular, we need an invertible matrix $X$ such that $X^{-1}\Phi^T$ is better conditioned. 
Let us perform a QR-decomposition on $C = QR$. Then, we get,
\begin{equation}
 \Phi(x)^T = CD\He(x)^T = Q
 \begin{pmatrix}
 R_1 & R_2                     
 \end{pmatrix}
 \begin{pmatrix}
  D_1 & 0 \\
  0 & D_2
 \end{pmatrix}\He(x)^T.
 \label{eq:CDPsi}
\end{equation}
Consider $X = QR_1D_1$. The new basis $\Psi := X^{-1} \Phi(x)^T$ can be formed as,
\begin{align}
\Psi(x)^T & = D_1^{-1} R_1^{-1} Q^{\mathrm{H}}\Phi(x)^T = D_1^{-1} R_1^{-1} Q^{\mathrm{H}} Q
 \begin{pmatrix}
  R_1 D_1 &
  R_2 D_2
 \end{pmatrix}\He(x)^T\\
&= \begin{pmatrix}
   \mathrm{Id} &
   D_1^{-1}R_1^{-1}R_2 D_2
  \end{pmatrix}
\He(x)^T.
\label{eq:correctionRelation}
\end{align}
To avoid under/overflow in the computation of $D_1^{-1}R_1^{-1}R_2 D_2$, we form the two matrices $\tilde R = R_1^{-1}R_2$ and $\tilde D \in \mathbb{R}^{N \times M-N}$ with elements 
\begin{equation}
\tilde d_{i,j} = \gamma^{j_1-j_2}\varepsilon^{2(j_2-j_1)} L^{j_2-j_1} \sqrt{\frac{j_1 !}{j_2 !}}\sqrt{2^{j_2-j_1}}.
\end{equation}
and compute their Hadamard product.  
That is why despite the harmful effects contained in $D$, the term $D_1^{-1} R_1^{-1}R_2 D_2$ does not lead to ill-conditioning.
\subsection{Truncation value $M$}
The major question arising for RBF-QR methods is the truncation value $M$. For $M=N$ we have a cheap and straightforward way of stably computing the interpolant without doing a costly QR-decomposition. Moreover, this ansatz allows for a tensor approach (cf.~\cref{sec:multivariate_tensor}) where forming full matricies for high dimensions can be avoided which is of great computational advantage.

Using $M \leq N$, the relation~\eqref{eq:CDPsi} becomes rank-deficient, since $ \mathrm{rank}(CD) < \min(M, N) = M$. Such a low-rank approximation was tested by Fasshauer and McCourt \cite[$\mathsection$ 6.1]{fasshauer2012stable} and showed rather good results. However, it still requires the assembly of a global matrix, which could be rather expensive in higher dimensions. Adding more expansion functions to reach $M=N$ significantly simplifies the structure of the method and does not harm the quality of the solution. That is why we will not be focusing on the rank-deficient case.

Since the eigenvalues of $D$ decay very rapidely, the terms $M\leq N$ become negligible for $N$ large enough, i.e.~the error is dominated by the error coming from the underlying RBF interpolation. This has also been confirmed numerically for various examples. In \cref{tab:errorDependenceJadd}, we provide the results obtained with $\varepsilon=0.1$ for one of the test functions from \cite[$mathsection$ 5.1]{rashidinia2016stable},
\begin{equation}
 f_2(x)  =  \sin\left(\frac{x}{2}\right) - 2 \cos(x) + 4\sin(\pi x), \quad x \in [-4, 4].
\end{equation}
\begin{center}
\captionof{table}{The $L_2$ interpolation error on the Chebyshev grid for the function $f_2$ with $N$ basis functions, $M = N + j_{\mathrm{add}}$ expansion functions, and 100 equidistant evaluation points.}
    \label{tab:errorDependenceJadd}
    \begin{tabular}{| c | c | c | c | c |}
    \hline
    \backslashbox{$j_{\mathrm{add}}$}{$N_{\mathrm{bf}}$} & 10 & 20 & 25 & 30 \\ \hline
    0 & 8.6629010 & 0.0029523 & 0.1937075 $\times 10^{-4}$ & 0.1827378 $\times 10^{-8}$ \\ \hline
    1 & 8.6629010 & 0.0029523 & 0.1944307 $\times 10^{-4}$ & 0.1827378 $\times 10^{-8}$ \\ \hline
    2 & 8.6648555 & 0.0029609 & 0.1944307 $\times 10^{-4}$ & 0.1836897 $\times 10^{-8}$ \\ \hline
    3 & 8.6648555 & 0.0029609 & 0.1944291 $\times 10^{-4}$ & 0.1836897 $\times 10^{-8}$ \\ \hline
    4 & 8.6648569 & 0.0029609 & 0.1944291 $\times 10^{-4}$ & 0.1836864 $\times 10^{-8}$ \\ \hline
    5 & 8.6648569 & 0.0029609 & 0.1944291 $\times 10^{-4}$ & 0.1836864 $\times 10^{-8}$ \\ \hline
    30 & 8.6648569 & 0.0029609 & 0.1944291 $\times 10^{-4}$ & 0.1836865 $\times 10^{-8}$ \\ \hline
    \end{tabular}
\end{center}

\section{Multivariate interpolation}\label{sec:multivariate}

In this section, we address the question of how to apply our stabilization algorithm to multivariate interpolation problems. First of all, we notice that the Gaussian basis is separable, i.e.~the multivariate Gaussian basis $\ensuremath{\boldsymbol\phi}_k(\x)$ (with $\x \in \mathbb{R}^d$) can be written as a product of one dimensional Gaussians,
\begin{equation}
\ensuremath{\boldsymbol\phi}_k(\x)= \exp\left(- \varepsilon^2 \|\x -\x_k^{\text{cen}}\|^2\right) = \prod_{i=1}^d \phi_k(x_i).
\end{equation}
One possibility is to derive an RBF-QR algorithm that truncates the multivariate expansion on a hyperbolic cross. 
If we use a tensor product grid of centering and collocation points, on the other hand, a very simple generalization of the stabilization algorithm can be designed by applying the HermiteGF expansion separately in each dimension. This ansatz yields a memory-sparse algorithm since it relies on Kronecker products of one dimensional matrices as we will derive in Section~\ref{sec:multivariate_tensor}. Therefore, it is particularly suitable for high-dimensional problems, even though it comes with the drawback that we loose the uniformity in all directions. Hagedorn generating functions \cite{dietert2017invariant,hagedorn2015generating} provide a truly multidimensional generalization of the HermiteGF expansion that additionally allows for anisotropic RBFs. This will be discussed in Section~\ref{sec:anisotropic}.

\subsection{Tensor product approach}\label{sec:multivariate_tensor}
For dimension $d$, let $X_{\ell}^{\mathrm{cen}}$, $\ell=1,\ldots,d$, be the centering points along each coordinate direction. Then, we can index the $d$ variate basis by a multi-index $\kb = (k_1,\ldots, k_d)$  and  write the multivariate interpolant $s(\x)$ as
\begin{equation}\begin{aligned}
s(\x) &=& \sum_{k_1=1}^{N_1} \ldots \sum_{k_d=1}^{N_d} \alpha_{\kb} \phi_{\kb}(\x) = \sum_{k_1=1}^{N_1} \ldots \sum_{k_d=1}^{N_d} \alpha_{\kb} \prod_{\ell=1}^d \phi_{k_{\ell}}(x_{\ell}) \\
&=& \left(\Phi(x_d,X_d^{\mathrm{cen}}) \otimes \ldots \otimes \Phi(x_1,X_1^{\mathrm{cen}}) \right) \ve{(\alpha)},
\end{aligned}\end{equation}
where we denote by $\ve{(\alpha)}$ the vectorization of the coefficient tensor $\alpha$. Now, we can replace $\Phi(x_{\ell},X_{\ell}^{\mathrm{cen}})$  by $H^{\gamma,\varepsilon}(x_{\ell}) B(\varepsilon, \gamma,X_{\ell}^{\mathrm{cen}})$ transforming the individual one-dimensional Gaussian bases to the HermiteGF basis with $M_{\ell}=N_{\ell}$ expansion coefficients. This yields the following expression for the interpolant,
\begin{equation}
s_M^{\gamma}(\x) = \left(H^{\gamma,\varepsilon}(x_d) B(\varepsilon, \gamma,X_d^{\mathrm{cen}})\otimes \ldots \otimes H^{\gamma,\varepsilon}(x_1) B(\varepsilon, \gamma,X_1^{\mathrm{cen}}) \right) \ve(\alpha).
\end{equation}
Introducing a second tensor product grid for the collocation points $X_{\ell}^{\mathrm{col}}$, $\ell=1,\ldots,d$, we analogously get a Kronecker product representation of the collocation matrix yielding the following expression for the expansion coefficients $\alpha$,
\begin{equation}\begin{aligned}
\ve(\alpha) =&& \left( H^{\gamma,\varepsilon}(X_d^{\mathrm{col}}) B(\varepsilon, \gamma,X_d^{\mathrm{cen}})\otimes \ldots \otimes H^{\gamma,\varepsilon}(X_1^{\mathrm{col}}) B(\varepsilon, \gamma,X_1^{\mathrm{cen}}) \right)^{-1}\\
&& \ve(f(X_1^{\mathrm{col}}, \ldots, X_d^{\mathrm{col}})).
\end{aligned}\end{equation}
Putting everything together, we get
\begin{equation}\begin{aligned}
&s_M^{\gamma}(\x) = \left(H^{\gamma,\varepsilon}(x_d) B(\varepsilon, \gamma,X_d^{\mathrm{cen}})\otimes \ldots \otimes H^{\gamma,\varepsilon}(x_1) B(\varepsilon, \gamma,X_1^{\mathrm{cen}}) \right)\\
&\left( H^{\gamma,\varepsilon}(X_d^{\mathrm{col}}) B(\varepsilon, \gamma,X_d^{\mathrm{cen}})\otimes \ldots \otimes H^{\gamma,\varepsilon}(X_1^{\mathrm{col}}) B(\varepsilon, \gamma,X_1^{\mathrm{cen}}) \right)^{-1} \ve(f(X_1^{\mathrm{col}}, \ldots, X_d^{\mathrm{col}})\\
&= \left(H^{\gamma,\varepsilon}(x_d) H^{\gamma,\varepsilon}(X_d^{\mathrm{col}})^{-1}\otimes \ldots \otimes H^{\gamma,\varepsilon}(x_1) H^{\gamma,\varepsilon}(X_1^{\mathrm{col}})^{-1}  \right)\ve(f(X_1^{\mathrm{col}}, \ldots, X_d^{\mathrm{col}})).
\end{aligned}\end{equation}
Hence, we can compute the matrices $H^{\gamma,\varepsilon}(x_{\ell})H^{\gamma,\varepsilon}(X_{\ell}^{\mathrm{col}})^{-1}$ separately for each dimension $\ell=1,\ldots,d$, and then apply them mode-wise to the tensor $f(X_1^{\mathrm{col}}, \ldots, X_d^{\mathrm{col}})$ of function values. The memory requirements for the interpolation matrices is hence limited to $dN^2$ which is much smaller than the memory requirement for the full $d$ dimensional interpolation matrix of $N^{2d}$.

    \subsection{Anisotropic approximation}
    \label{sec:anisotropic}
Until now we only considered interpolations with the same shape parameter $\epsilon$ in both directions. Given the HermiteGF-tensor structure one could also easily use different values of $\epsilon$ in different directions. Finding a stable interpolant for anisotropic multidimensional RBFs of type $\exp(-(x-x_k)^TE(x-x_k))$ is a more challenging task. A similar question was raised in \cite[$\mathsection$ 8.5]{fasshauer2012stable}, however, without further investigation. It turns out that generating function theory provides a convenient toolbox for deriving a stable basis that spans the same space, but doesn't lead to ill-conditioning related to small elements in $E$. Adapting the result of \cite[Lemma 5]{dietert2017invariant} we derive the HagedornGF expansion that is very similar to the HermiteGF expansion. 
\begin{lemma}{HagedornGF expansion}

For all positive definite $E \in \mathbb{R}^{d \times d}$, $\x_k \in \mathbb{R}^d$ the following relation holds,
 \begin{equation}
  \exp(-(\x-\x_k)^TE(\x-\x_k)) = \exp(-\x_k^T E \x_k + \x_k^T E^TE\x_k)\sum_{\lbb \in \mathbb{N}^d} \frac{(E\x_k)^{\lbb}}{\lbb!}h_{\lbb}(\x) \exp(-\x^T E \x),  
 \end{equation}
where $x_k$ is the center of the anisotopic Gaussian, $E$ is a shape matrix and $h_{\lbb}(\x)$ are tensor product of physicists' Hermite polynomials,
\begin{equation}
 h_{\lbb}(\x) = h_{\lb_1}(x_1) \cdot \ldots \cdot h_{\lb_d}(x_d).
\end{equation}

\end{lemma}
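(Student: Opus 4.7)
The plan is to reduce the anisotropic multivariate identity to the one-dimensional generating function that was already used in the proof of \cref{th:HermiteGF_expansion}, by tensorization and a well-chosen substitution. First, I would record the $d$-dimensional Hermite generating function: taking the product over $i=1,\ldots,d$ of the one-dimensional identity $\mathrm{e}^{2s_it_i-t_i^2} = \sum_{n_i\geq 0} \frac{t_i^{n_i}}{n_i!}h_{n_i}(s_i)$ and using the definition $h_{\lbb}(\x)=h_{\lb_1}(x_1)\cdots h_{\lb_d}(x_d)$, we obtain
\begin{equation}
\exp\bigl(2\x^T\tb - \tb^T\tb\bigr) = \sum_{\lbb\in\mathbb{N}^d} \frac{\tb^{\lbb}}{\lbb!}\, h_{\lbb}(\x), \qquad \x,\tb\in\mathbb{R}^d.
\end{equation}
Absolute convergence holds coordinatewise for every fixed $\tb$, so this identity can be manipulated termwise.

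Next, I would expand the quadratic form on the left-hand side of the claim as
\begin{equation}
-(\x-\x_k)^T E(\x-\x_k) = -\x^T E\x + 2\x^T E\x_k - \x_k^T E\x_k,
\end{equation}
and factor the exponential into three pieces accordingly, isolating the mixed term $\exp(2\x^T E\x_k)$. The crucial step is then to apply the generating function identity with the substitution $\tb = E\x_k$. Since $\tb^T\tb = \x_k^T E^T E\x_k$, this gives
\begin{equation}
\exp\bigl(2\x^T E\x_k\bigr) = \exp\bigl(\x_k^T E^T E\x_k\bigr)\sum_{\lbb\in\mathbb{N}^d}\frac{(E\x_k)^{\lbb}}{\lbb!}\, h_{\lbb}(\x).
\end{equation}
Multiplying by $\exp(-\x_k^T E\x_k)\exp(-\x^T E\x)$ and collecting the two scalar exponentials into the single prefactor $\exp\bigl(-\x_k^T E\x_k + \x_k^T E^T E\x_k\bigr)$ produces exactly the stated identity.

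The positive-definiteness of $E$ is not used in the algebraic manipulation itself, only to ensure that $\exp(-\x^T E\x)$ decays and the factorization is meaningful for the intended radial-basis-function application; the pointwise convergence of the series is inherited from the one-dimensional case with $t=(E\x_k)_i$ fixed in each coordinate. The only mild subtlety is keeping track of the two distinct quadratic forms, $\x_k^T E\x_k$ (from completing the square of $(\x-\x_k)^T E(\x-\x_k)$) and $\x_k^T E^T E\x_k$ (from $\tb^T\tb$ with $\tb=E\x_k$), which combine in the prefactor and account for the asymmetric-looking expression on the right-hand side. Beyond that, the argument is essentially the multivariate analogue of the calculation already carried out in the proof of \cref{th:HermiteGF_expansion}, with the role of $(\varepsilon^2 y/\gamma,\gamma x)$ now played by $(E\x_k,\x)$.
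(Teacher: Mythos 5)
Your proof is correct and follows essentially the same route as the paper: expand $-(\x-\x_k)^TE(\x-\x_k)$, isolate the cross term $\exp(2\x^TE\x_k)$, and apply the multivariate Hermite generating function with $\tb=E\x_k$, so that $\tb^T\tb=\x_k^TE^TE\x_k$ accounts for the asymmetric prefactor. The only cosmetic difference is that you obtain the identity $\exp(2\x^T\tb-\tb^T\tb)=\sum_{\lbb}\tb^{\lbb}h_{\lbb}(\x)/\lbb!$ by tensorizing the one-dimensional generating function, whereas the paper specializes the Hagedorn generating function $\exp(2\x^T\tb-\tb^TM\tb)$ at $M=\mathrm{Id}$, where the Hagedorn polynomials reduce to the same tensor products of Hermite polynomials.
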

\begin{proof}
 The general Hagedorn polynomial's generating function is given by \cite[Lemma 5]{dietert2017invariant}, \cite[Theorem 3.1]{hagedorn2015generating},
 \begin{equation}
  \sum_{\lbb \in \mathbb{N}^d} \frac{t^{\lbb}}{\lbb!} q_{\lbb}(\x) = \exp(2\x^T\tb - \tb^TM\tb),
 \end{equation}
where $q^{M}_{\lbb}(\x)$ are generalized Hagedorn polynomials \cite[$\mathsection$ 3]{dietert2017invariant} that are given for any symmetric unitary matrix $M\in \mathbb{C}^{d \times d}$ by the following three-term recurrence,
\begin{equation}
(q_{\lbb+e_j}^M(\x))_{j=1}^d = 2\x q_{\lbb}^M(\x) - 2M \cdot (\lb_jq_{\lbb-e_j}^M(\x))_{j=1}^d,
\end{equation}
with boundary conditions $q^M_0 = 1$, $q_{\lb}^M = 0$ for all $\lb \notin \mathbb{N}^d$.

Consider $M = \mathrm{Id}$, $t = E\x_k$, then
\begin{equation}
 \sum_{\lbb\in\mathbb{N}^d}\frac{(E\x_k)^{\lbb}}{\lbb!}q_{\lbb}(x) = \exp(2\x^TE\x_k - \x_k^TE^TE\x_k).
\end{equation}
Note that for the case of $M = \mathrm{Id}$, Hagedorn polynomials turn into a tensor product of Hermite polynomials,
\begin{equation}
 q^{\mathrm{Id}}_{\lbb}(\x) = h_{\lb_1}(x_1) \cdot \ldots \cdot h_{\lb_d}(x_d) = h_{\lbb}(\x).
\end{equation}
Hence, we get,
\begin{align}
 &\phantom{=} \exp(-(\x-\x_k)^TE(\x-\x_k)) = \exp(-\x^TE\x + 2\x^TE\x_k - \x_k^TE\x_k) \\
 &= \exp(-\x_k^T E \x_k) \cdot \exp(\x_k^T E^T E \x_k) \cdot \exp(2\x^TE\x_k - \x_k^TE^TE\x_k) \cdot \exp(-\x^T E \x) \\
 & = \exp(-\x_k^T E \x_k + \x_k^T E^TE\x_k) \sum_{\lbb\in \mathbb{N}^d} \frac{(E\x_k)^{\lbb}}{\lbb!}h_\lb(\x)\exp(-\x^T E \x).
\end{align}

\end{proof}
An RBF-QR method can then be naturally derived based on the HagedornGF expansion. This expansion provides a new powerful tool of dealing with anisotropic approximation. However, the computational costs of that method are way higher than for the HermiteGF-tensor approach. 

Note that HermiteGF-tensor interpolation considered before corresponds to the following matrix $E$,
\begin{equation}
 E_{\mathrm{tensor}} = \begin{pmatrix}
                                 \epsilon^2 & 0\\
                                 0 & \epsilon^2
                                \end{pmatrix}.
\end{equation}
\section{Numerical results}
\label{sec:numericalResults}
In this section, we first discuss the implementation of the new method. Then, we compare the HermiteGF-based algorithm with the existing stabilization methods. We also look closer into the role of the parameter $\gamma$ in conditioning and show some multidimensional results. For all 1D tests we look at the $L_2$ error of the interpolant evaluated at 100 uniformly distributed points. For the multidimensional case less evaluation points have been used and will be specified separately below.
\subsection{Stable implementation}\label{sec:implementation}
We have implemented the HermiteGF interpolation both in \texttt{MATLAB} and \texttt{Julia}. The code can be downloaded from \url{https://gitlab.mpcdf.mpg.de/clapp/hermiteGF}. The \texttt{MATLAB} implementation has shown more stable results in some cases, on the other hand, \texttt{Julia} yields better performance (cf.~\cref{sec:performance}), especially in high dimensions where \texttt{Julia} enables easy and efficient parallelization. 

Even though the described approach allows to reduce the ill-conditioning of the collocation and evaluation matrices, the HermiteGF-based matrices still become increasingly ill-conditioned for growing number of basis functions. On the other hand, the product of the evaluation matrix $\He(X^{\mathrm{eval}})$ and the inverse of the collocation matrix $\He(X^{\mathrm{col}})$ is still well-conditioned. For this reason, it is crucial to take special care when building these matrices and inverting the collocation matrix.
The following configurations have proven to be preferable:
\begin{itemize}	
 \item For all the dimensions $\ell=1,\ldots,d$ compute $\He(X_{\ell}^{\mathrm{eval}})\He(X_{\ell}^{\mathrm{col}})^{-1}$ first, which allows to cancel out the ill-conditioning.\\
 Using the built-in operator \texttt{/} for the inversion yields good results both in \texttt{MATLAB} and \texttt{Julia}. However, \texttt{MATLAB} proved superior in the severely ill-conditioned case.
 \item The HermiteGF basis functions can be stabely evaluated by formulating them in terms of the Hermite functions $\psi_n$,
\begin{equation}
\He_n(x) =  \pi^{1/4}\psi_n(\gamma x)\exp(-\epsilon^2x^2 + (\gamma x)^2/2).
\end{equation}
Hermite functions can be stabely evaluated based on their three-term recurrence. \\
\end{itemize}
All experiments were performed with \texttt{MATLAB} if not stated otherwise.

\subsection{Comparison with existing RBF-QR methods}
In this section, we compare the performance of the above described method with the Chebyshev-QR method\footnote{Code downloaded from \url{http://www.it.uu.se/research/scientific_computing/software/rbf_qr} on November 28, 2016.} and the Gauss-QR method\footnote{Code downloaded from \url{http://math.iit.edu/~mccomic/gaussqr/} on May 29, 2017.}. 
We use two test functions that were studied in \cite[$\mathsection$ 5.1]{rashidinia2016stable}, namely
\begin{align}
f_1(x) & =  e^x \sin(2\pi x) + \frac{1}{x^2 + 1}, \quad x \in [-1, 1],\\ 
f_2(x) & =  \sin\left(\frac{x}{2}\right) - 2 \cos(x) + 4\sin(\pi x), \quad x \in [-4, 4].
\end{align}
\begin{figure}[!t]
\centering
\begin{subfigure}[t]{0.44\textwidth} 
\includegraphics[scale=0.43]{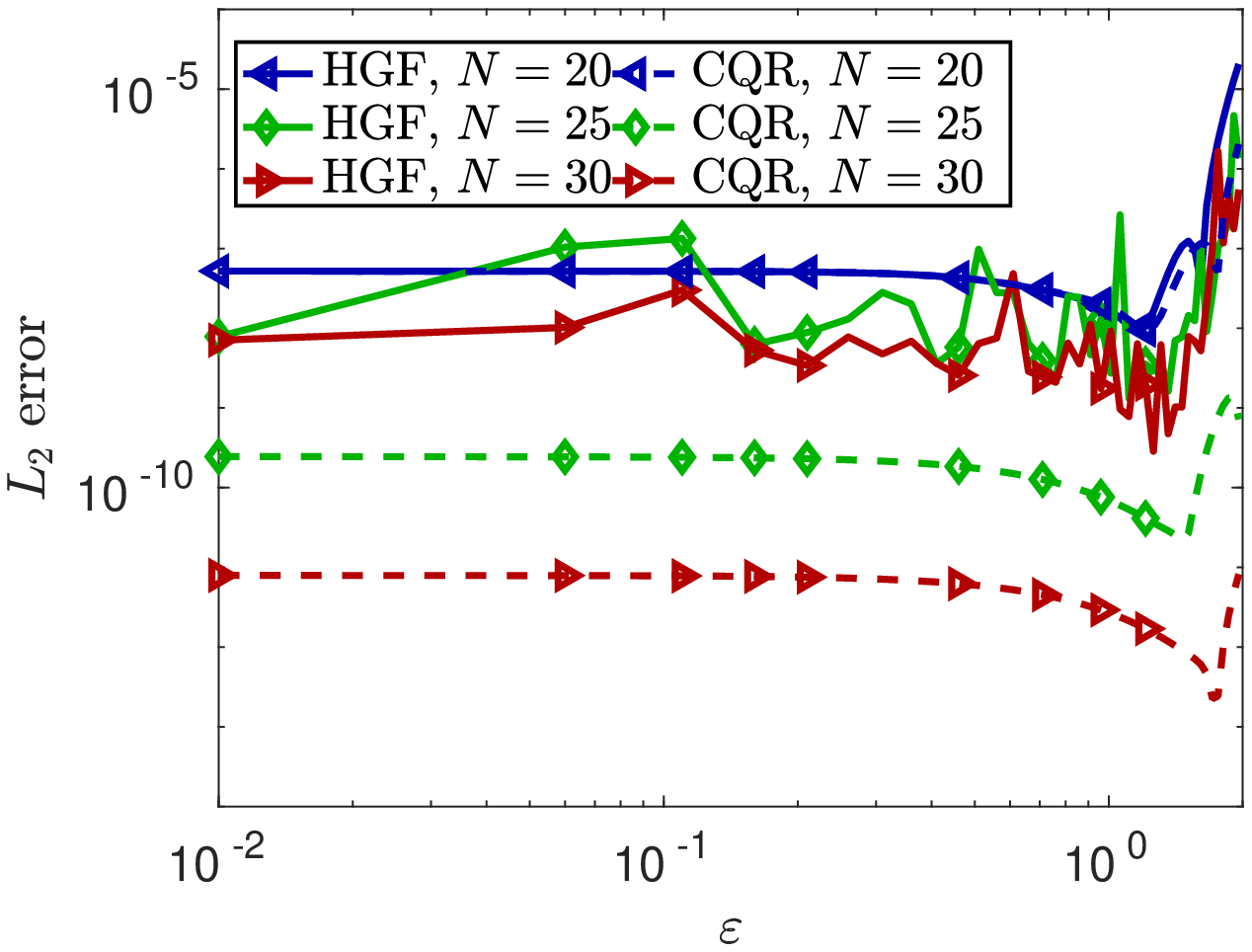}
\caption{$\gamma=1$, $\varepsilon$ = \texttt{0.01:0.05:1.99}.}
\label{fig:f1_methodsComparissonUnstable}
\end{subfigure} 
\begin{subfigure}[t]{0.44\textwidth} 
 \includegraphics[scale = 0.43]{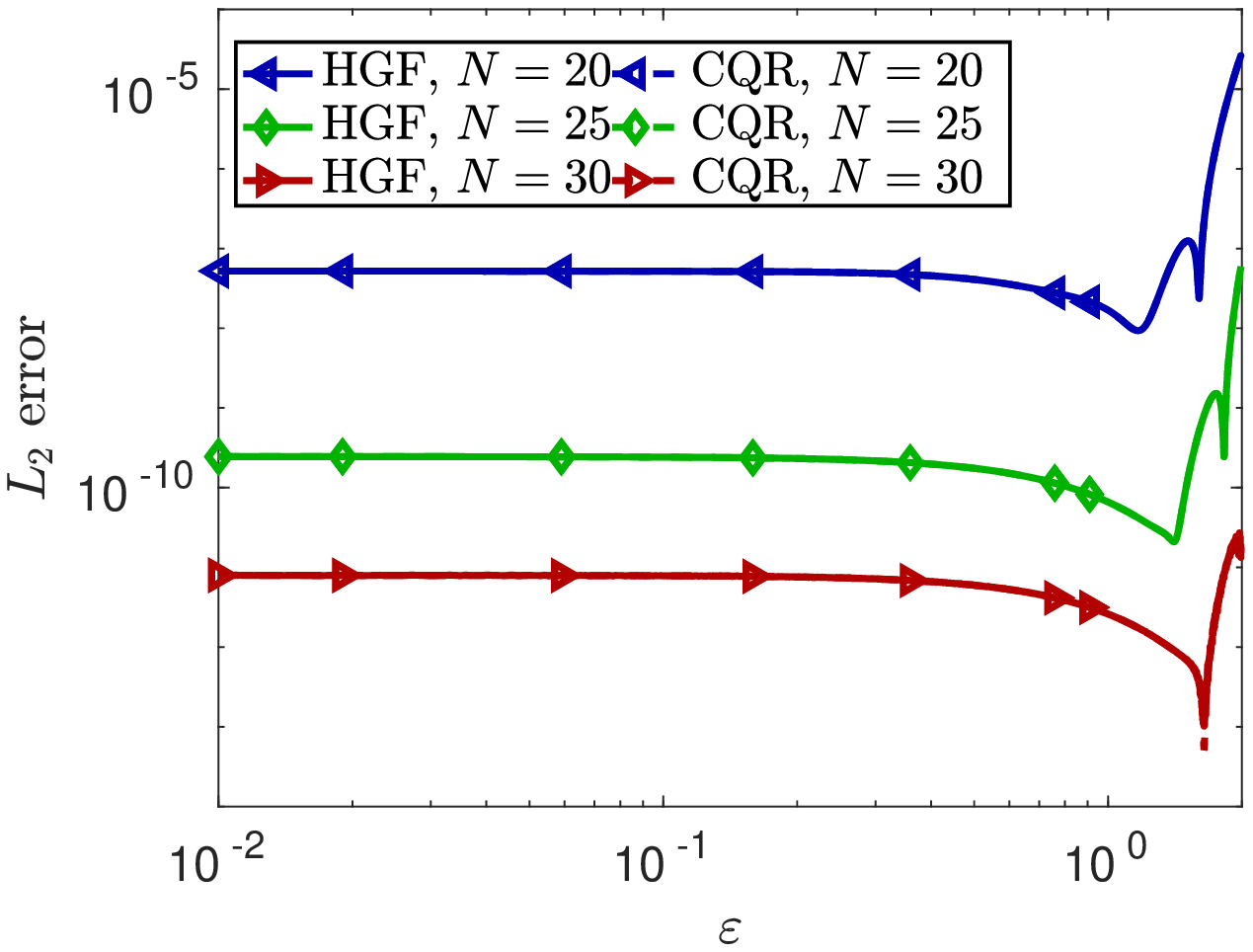}
 \caption{$\gamma=2$, $\varepsilon$ = \texttt{0.01:0.001:1.99}.}
 \label{fig:f1_methodsComparissonStable}
\end{subfigure}
\caption{For the function $f_1$ HermiteGF-tensor algorithm (HGF) tends to be unstable for $\gamma = 1$ the for larger number of Chebyshev nodes unlike the Chebyshev-QR (CQR). However the error magnitude is still reasonable. Increasing the value of $\gamma$ to 2 stabilizes the method and brings the interpolation quality in agreement with other methods.} 
\label{fig:f1_methodsComparisson}
\end{figure}
\begin{figure}[b!]
\centering
\begin{subfigure}[t]{0.44\textwidth} 
\includegraphics[scale=0.43]{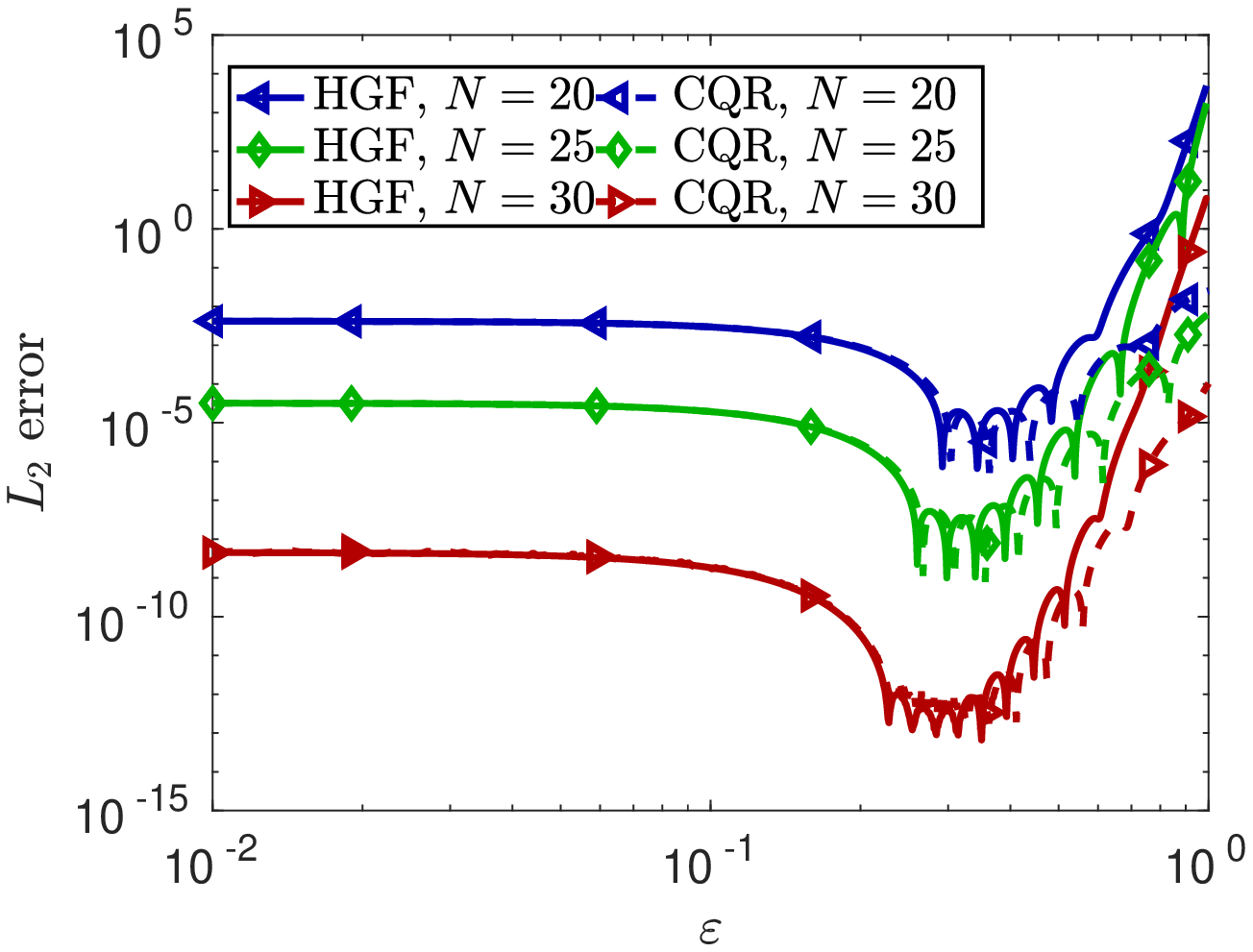} 
 \caption{$\gamma=1$, $\varepsilon$ = \texttt{0.01:0.001:0.99}.}
  \label{fig:f2_methodsComparissonGauss}
\end{subfigure} 
\begin{subfigure}[t]{0.44\textwidth} 
\includegraphics[scale=0.43]{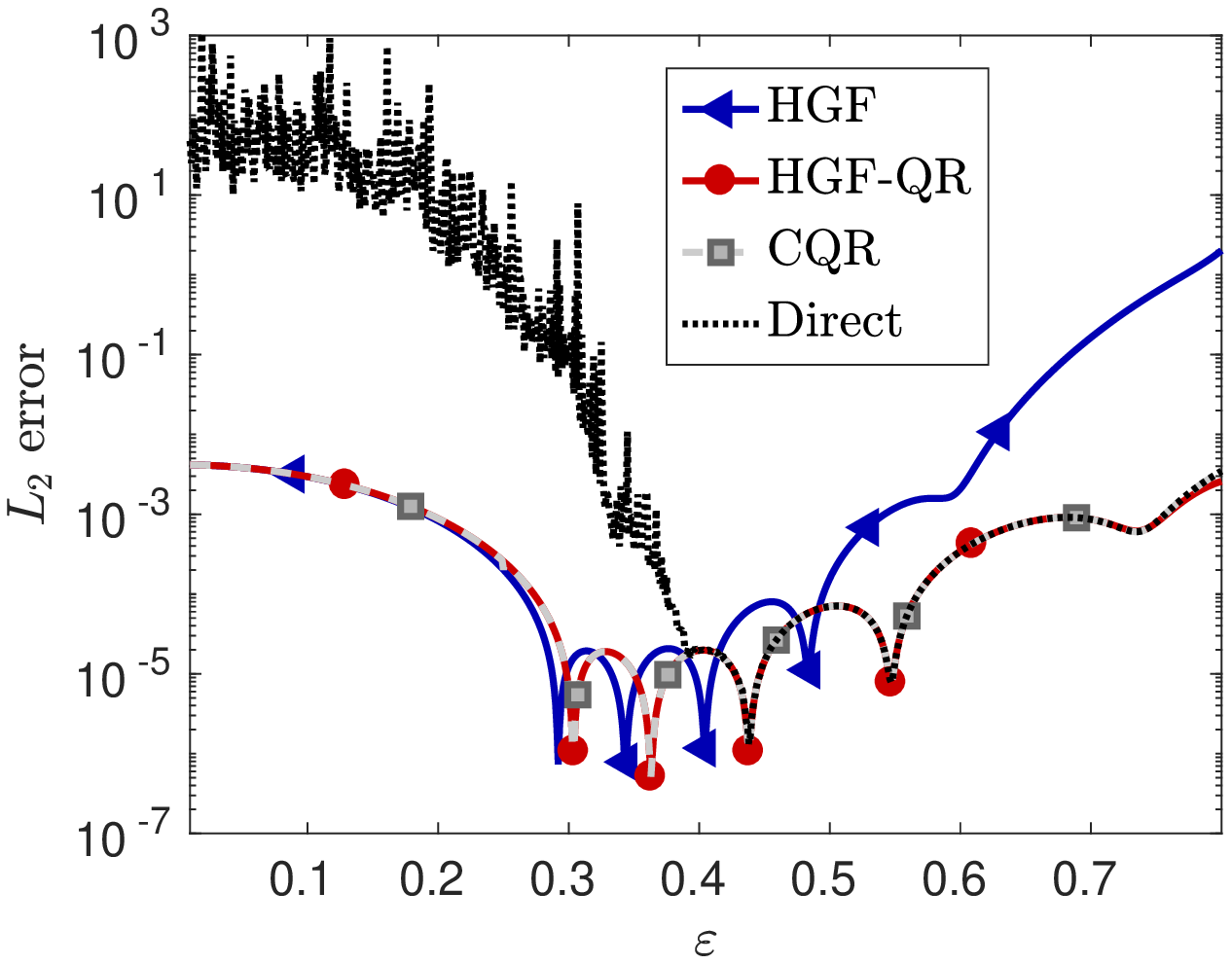}
 \caption{$\gamma=1.3$, $\varepsilon$ = \texttt{0.01:0.001:0.8}, $N = 20$. For HGF-QR $M = 30$.}
 \label{fig:f2_methodsComparissonQR}
\end{subfigure} 
\caption{Dependence of the L2 error for the function $f_2$ on the value of $\varepsilon$ with different number of Chebyshev nodes shown for the HermiteGF-tensor (HGF), the HermiteGF-QR (HGF-QR), the Chebyshev-QR (CQR), and the Gauss-QR (GQR) method. With the natural choice of $\gamma=1$ the HermiteGF-tensor method is in a good agreement with the RBF-QR methods.} 
\label{fig:f2_methodsComparisson}.
\end{figure}
Note that for the Chebyshev-QR method we must always scale the interpolated function to the unit disk. This also implies a scaling of the value of the shape parameter which we account for in \cref{fig:f2_methodsComparisson}. We use Chebyshev collocation points here but discuss the case of uniform points in \cref{sec:results_uniform}

We look at the performance of the methods for different values of $\varepsilon$. The Chebyshev-QR error curves turned out to lay exactly on top of the Gauss-QR ones with the optimal values of $\alpha$ from \cite[$\mathsection$ 5.1]{rashidinia2016stable}, that is why we only present one of them at a time.  
For all setups in the flat limit our HermiteGF-tensor method performs in a stable way unlike RBF-Direct. In \cref{fig:f1_methodsComparissonUnstable}, we see that the algorithm gets unstable for the natual choice of $\gamma=1$ for function $f_1$. However, the magnitude of the error still stays around $10^{-8}$. If we increase the value of $\gamma$ to 2, we get a full resemblance to the Chebyshev-QR results (see  \cref{fig:f1_methodsComparissonStable}).  

For the function $f_2$ with $\gamma=1$, i.e. $\gamma L=4$, HermiteGF-tensor and Gauss-QR show comparable results (see \cref{fig:f2_methodsComparisson}): For small values of $\varepsilon$ the results are identical but they start to differ slightly for the optimal $\varepsilon$ range before clearly diverging when the error starts to grow. The curve for $N=20$ where this effect is most pronounced is further investigated in \cref{fig:f2_methodsComparissonQR}.  For larger $\varepsilon$ the RBF-Direct method produces stable results that are in agreement with the Chebyshev-QR method. In the figure, we also show the results of the HermiteGF-QR method with an expansion of $M=30$ points and $\gamma=1.3$, again agreeing with Chebyshev-QR. From these experiments, we conclude that $M>N$ can be necessary in the optimal $\varepsilon$ range (especially for small $N$) to exactly reproduce the Gaussian RBF interpolant. On the other hand, the HermiteGF method with $M=N$ gives results of the same quality while being cheaper. For larger values of $\varepsilon$, the method seems to be more sensitive to the parameter choice. However, in this range the RBF-Direct algorithm would anyway be preferable.

\subsection{Scaling and conditioning}
\label{sec:scalAndCond}

Let us take a look at the behavior of the condition number of the interpolation matrix for different values of $\gamma$. We consider an interpolation matrix on an interval $[-1,1]$ as a function of the number $N_{\textrm{col}}$ of Chebyshev points. Note that the interpolation matrix that has to be inverted is independent of the interpolated function. 

\begin{figure}[h]
\centering
 \includegraphics[scale=0.45]{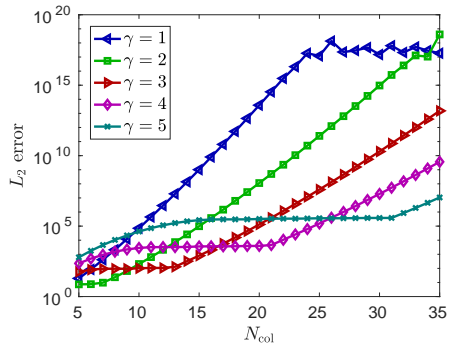}
 \caption{Conditioning of the interpolation matrix on the interval $[-1,1]$ for varying number of Chebyshev points. The condition number grows slower for larger values of the parameter $\gamma$.}
 \label{fig:conditioningVaryGamma}
\end{figure}

As we can see in \cref{fig:conditioningVaryGamma} the condition number gets smaller for larger values of $\gamma$. The larger the value of $\gamma$ the larger the evaluation interval $[-\gamma L, \gamma L]$ for the Hermite polynomials becomes. Therefore, for larger value of $\gamma$ the points are further away from each other for the same values of $N_{\textrm{col}}$, which leads to improved condition numbers. 

Recall that for the function $f_2$ from the previous section $\gamma = 1$ provided good results. However, in that case the interpolation interval was $[-4, 4]$. Therefore, the evaluation interval for Hermite polynomials is also $[-4,4]$ which corresponds to smaller condition number (equivalent to $\gamma=4$ in \cref{fig:conditioningVaryGamma}). Since for the function $f_1$, the interpolation interval was $[-1, 1]$, increasing $\gamma=2$---and hence the evaluation interval to $[-2, 2]$---reduced the condition number and allowed for stable computations for higher values of $N_{\textrm{col}}$. 

As mentioned before, the conditioning of the interpolation matrix does not depend on the interpolated function itself. However, the impact on the result can be different for different functions. Consider the following functions on the interval $[-1, 1]$ (see \cref{fig:testingFunctions}),
\begin{align}
 f^{\mathrm{c}}_1 = \cos(x^2), \quad f^{\mathrm{c}}_2 = \cos(2x^2), \quad
 f^{\mathrm{c}}_4 = \cos(4x^2).
\end{align}
We expect that the faster the function changes, especially near the boundaries, the more sensitive the interpolation quality should be towards the condition number. Indeed, looking at the $L_2$ error (see  \cref{fig:testing_gamma}) we see that for $f^{\mathrm{c}}_1$ the quality is good for all integer values of $\gamma L$ between 1 and 5.  On the other hand, for the function  $f^{\mathrm{c}}_4$ the result for $\gamma=1$ is considerably worse than for other values. Note that the values of $\gamma L$ are not fixed to integers but any $\gamma L > 0$ can be chosen. On the other hand, the stability is not sensitive to minor changes of $\gamma L$ which is why we use a rough integer estimation of the desired evaluation interval.
\begin{figure}[t]
\centering
\includegraphics[scale=0.4]{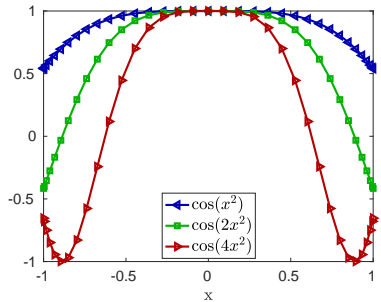}
\caption{Test functions with different gradients.}
\label{fig:testingFunctions}
\end{figure}
\begin{figure}[h]
 \begin{subfigure}[t]{0.33\textwidth} 
\includegraphics[scale=0.33]{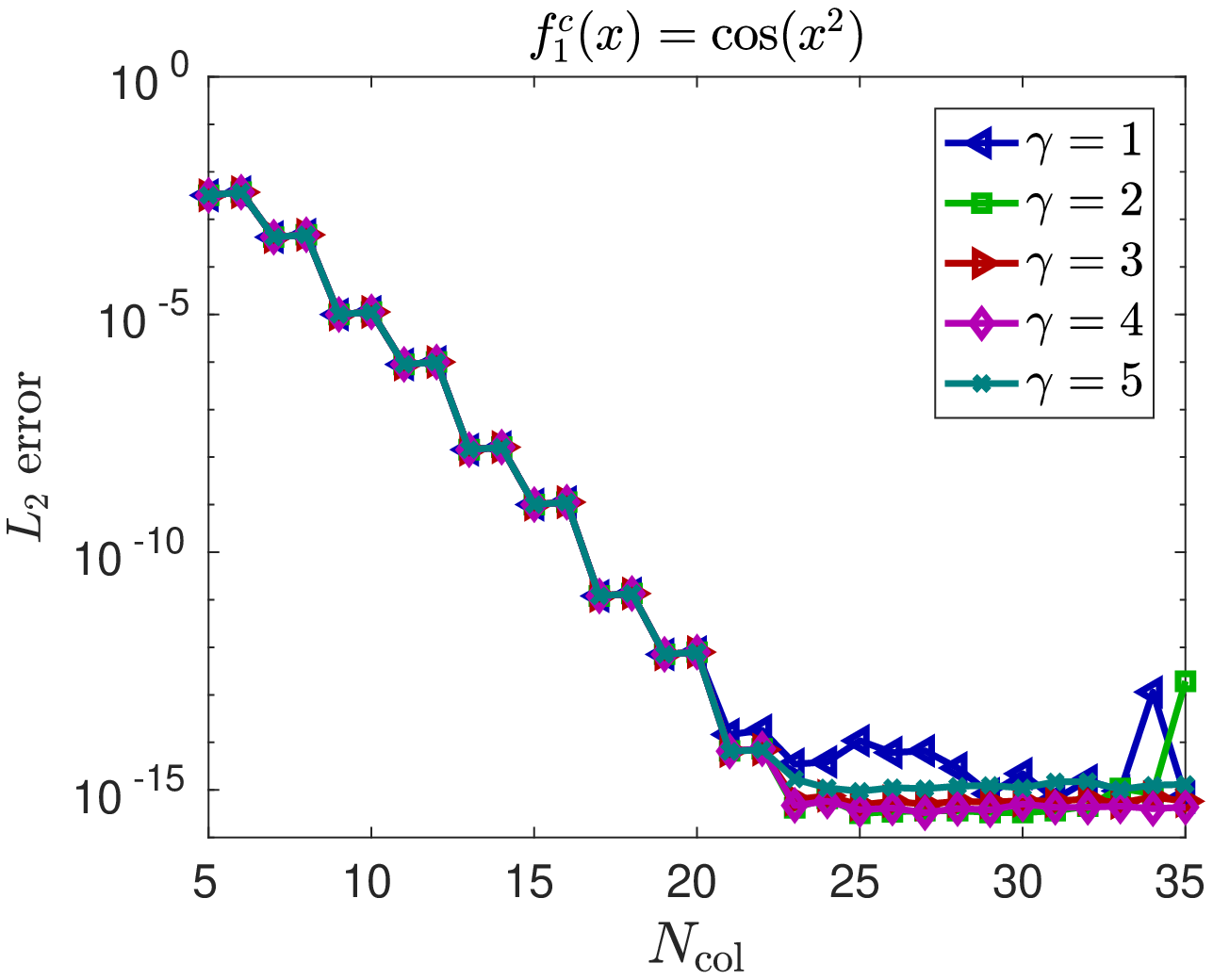} 
\end{subfigure}%
\begin{subfigure}[t]{0.33\textwidth} 
\includegraphics[scale=0.33]{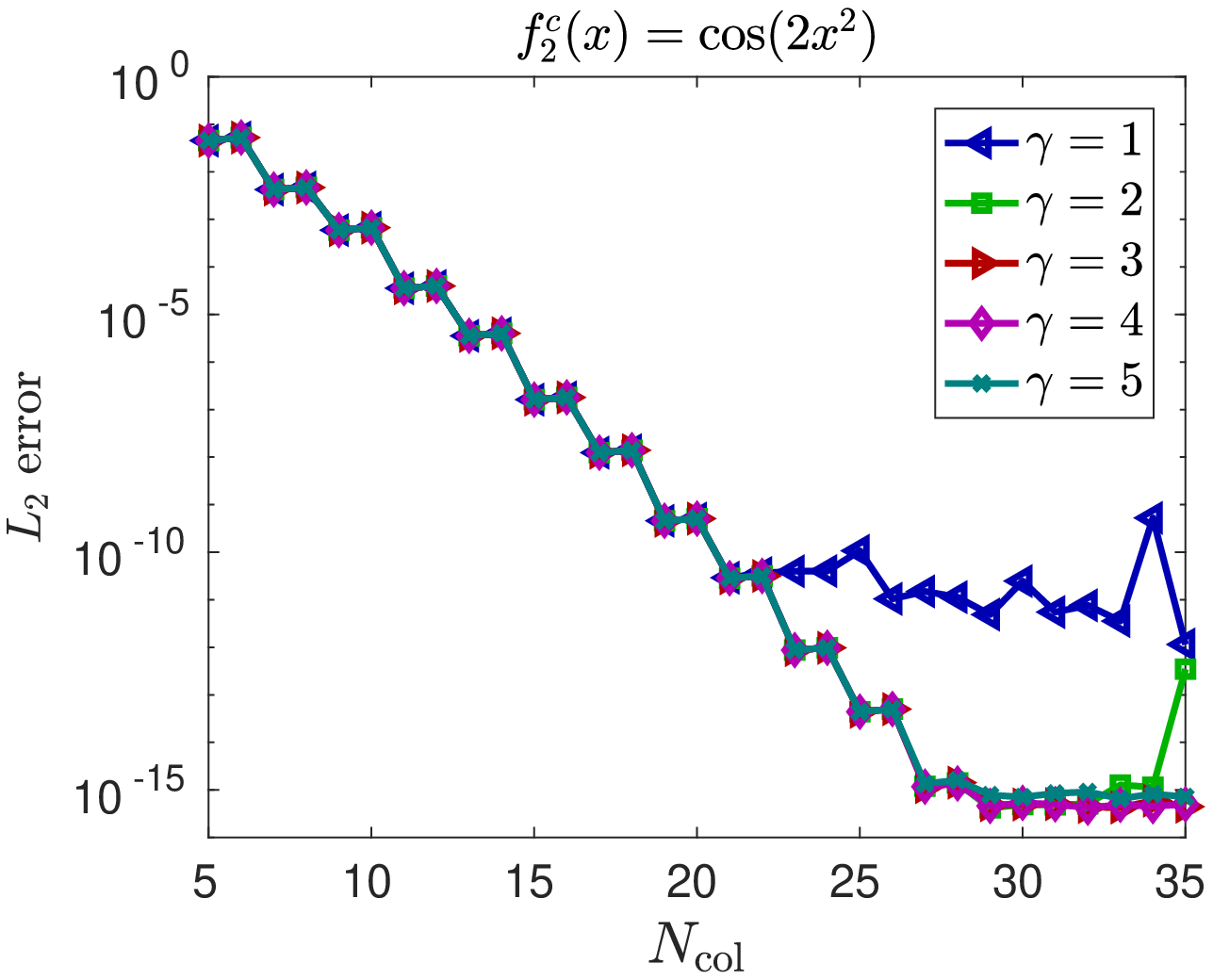}
\end{subfigure} 
\begin{subfigure}[t]{0.33\textwidth} 
\includegraphics[scale=0.33]{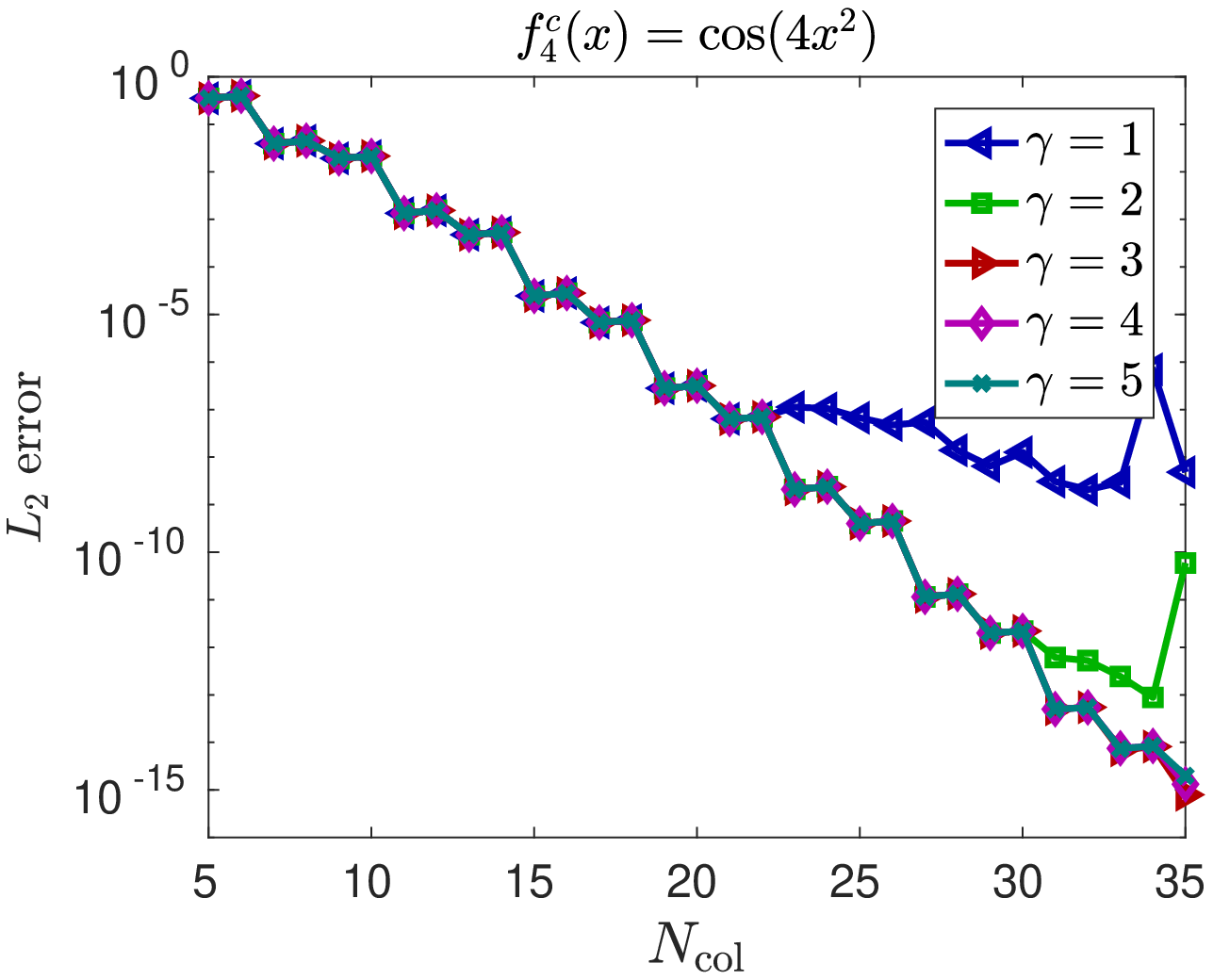} 
\end{subfigure} 
\caption{The $L_2$ error for the testing functions for different values of $\gamma$. For the more flat function $f^{\mathrm{c}}_1$ all values of $\gamma$ suit equally good. For the other two functions it is preferable to use $\gamma  \geq 3$.} 
\label{fig:testing_gamma}
\end{figure}
Even though using large $\gamma$ appears to be advantageous one should not forget that Hermite polynomials take very large values on big domains. That can lead to cancellations and overflow. From our experience, the range $[3, 5]$ seems to be optimal for $\gamma L$ for most of the cases.
\subsection{Interpolation on a uniform grid}\label{sec:results_uniform}

Spectral interpolation on uniform grids is known to be intrinsically ill-conditioned causing large errors close to the boundary \cite{platte2011impossibility}. This ill-conditioning persists after a basis transformation so that the number of basis functions $N_{\mathrm{col}}$ needs to be chosen small enough in applications where uniform nodes are of interest. We consider the following function for our tests,
\begin{equation}
 f_{\mathrm{u}}(x) = \sin(2x) + \cos(4x) + \frac{1}{2+x}, \quad x \in [-1, 1].
\end{equation}

\cref{fig:uniformGridExample_HermiteGF} shows the $L_2$ error in the interpolation of function $f_{\mathrm{u}}$ for $\varepsilon = 0.1$ as a function of the number of collocation points. The Chebyshev-QR algorithm and the HermiteGF algorithm for various values of $\gamma$ are considered. First, we note that we again need to choose $\gamma$ large enough to get results of the same quality as with the Chebyshev-QR algorithm. The results of the Chebyshev-QR algorithm also clearly show the increase of the error that is typical for uniform points (starting at $N_{\textrm{col}} = 24$). For the HermiteGF method, the error starts to decrease again as soon as numerical ill-conditioning of the interpolation matrix appears (cf.~\cref{fig:uniformGridExample_HermiteGF_cond}).

\begin{figure}[h]
\centering
\begin{subfigure}[t]{0.44\textwidth} 
\includegraphics[scale=0.42]{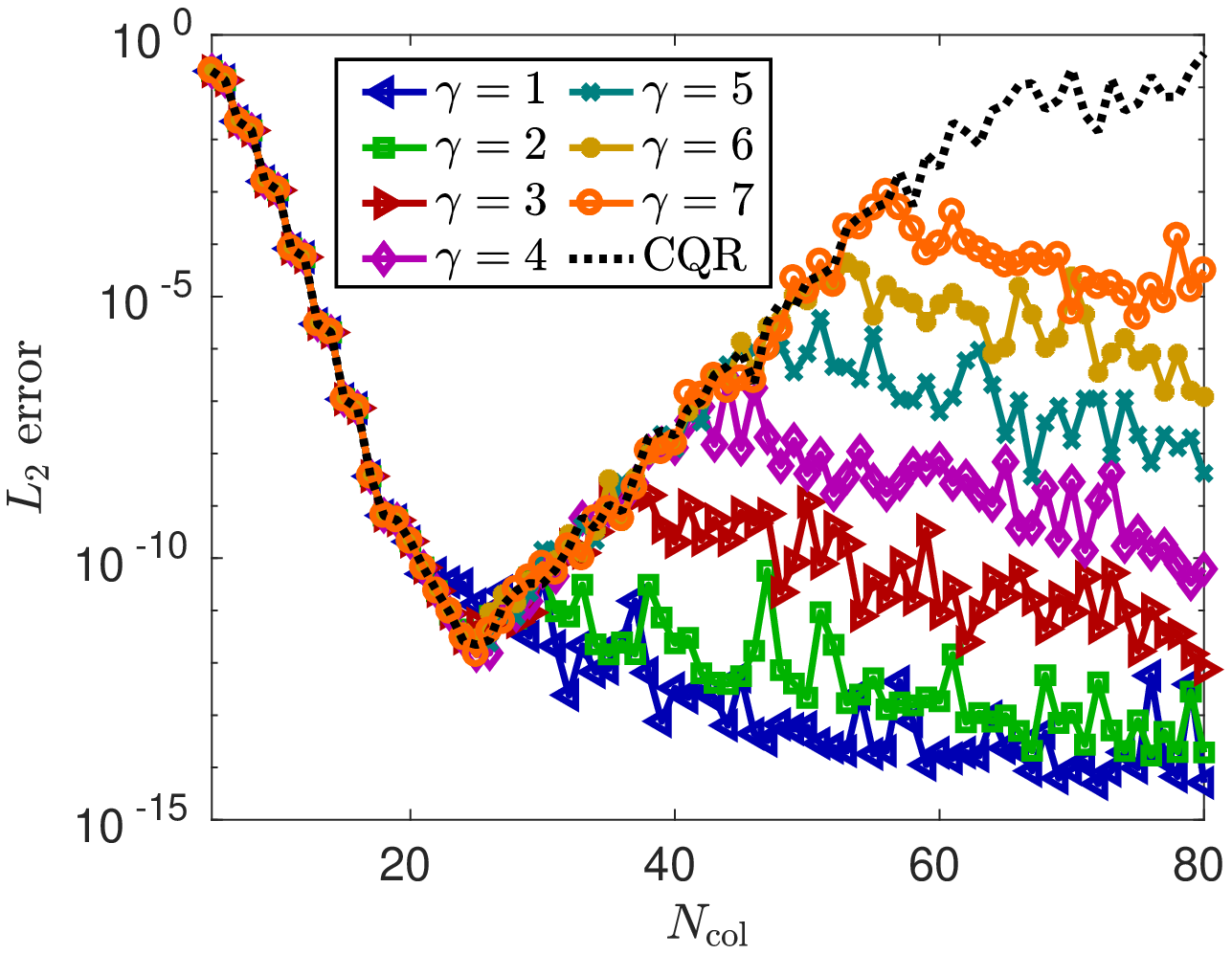} 
\caption{$L_2$ error of HermiteGF ($\gamma$ given in the legend) and Chebyshev-QR methods (CQR).}\label{fig:uniformGridExample_HermiteGF}
\end{subfigure}
\begin{subfigure}[t]{0.44\textwidth}
 \includegraphics[scale=0.42]{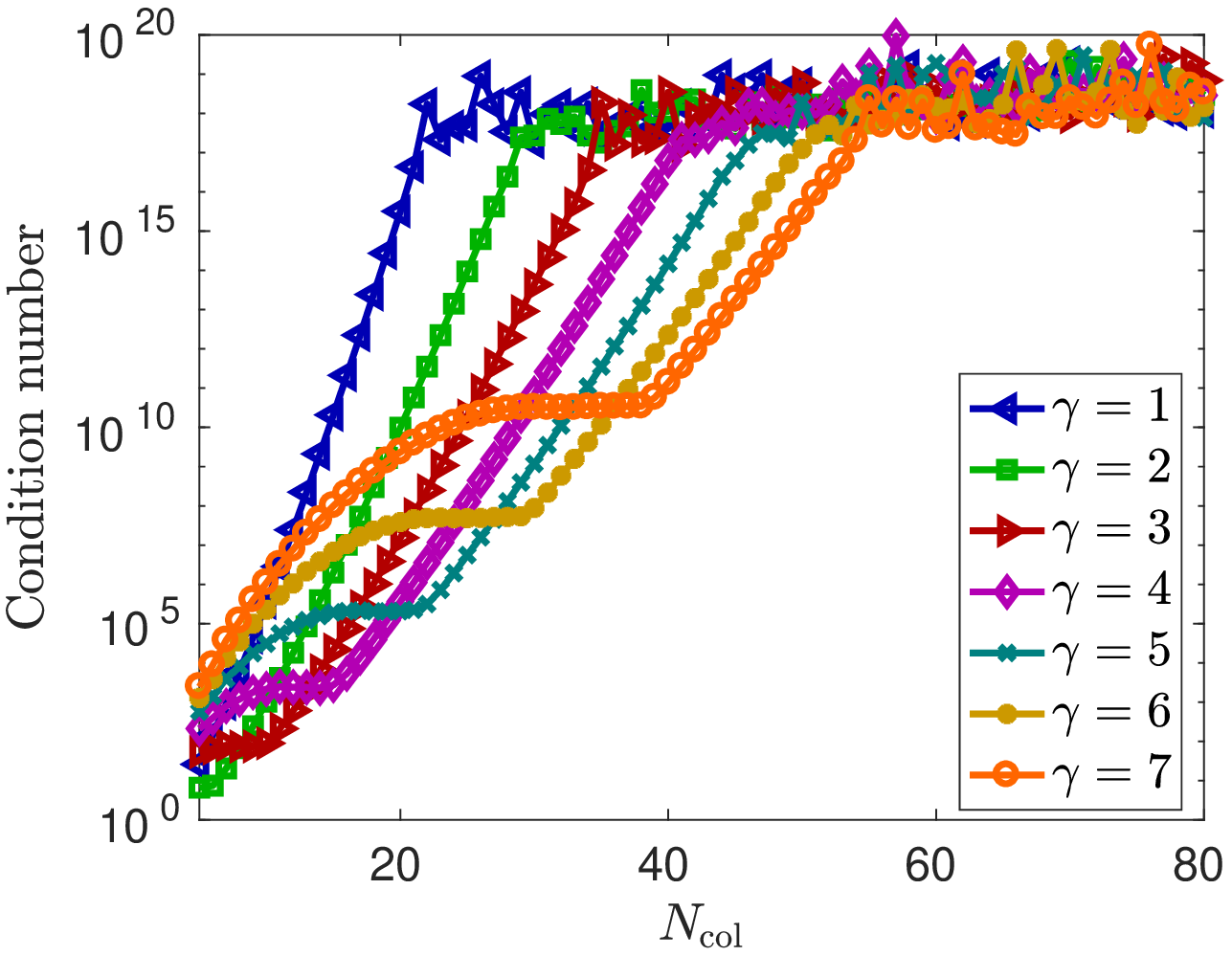}
\caption{HermiteGF, conditioning.}\label{fig:uniformGridExample_HermiteGF_cond}
\end{subfigure}

\caption{Error of in interpolation of the function $f_{\mathrm{u}}$ on a uniform grid.}
\label{fig:uniformGridExample}
\end{figure}

\subsection{Multivariate interpolation}

In this section we take a look at high dimensional interpolation. Consider the function,
\begin{equation}
 f_3(x) = \cos(\Vert x \Vert^2) 
\end{equation}
\begin{figure}[b]
 \centering
\includegraphics[scale=0.42]{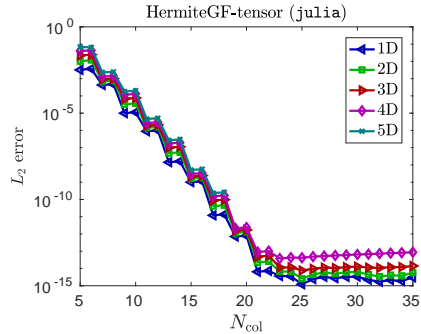}
 \caption{Dependence of the $L_2$ error of the HermiteGF-tensor ($\gamma=3$) interpolation on the number $N_{\textrm{col}}$ of Chebyshev nodes per dimension. The value of $\varepsilon$ is set to 0.1. The interpolation quality of the HermiteGF-tensor algorithm is almost dimension independent. The error has been computed on a uniform grid with 53 points per dimension.}
 \label{fig:chebyshevQR_HGF_tensor_scalability}
\end{figure}
We now look at the behavior of HermiteGF-tensor for different dimensions. With the use of simple parallelization via built-in \texttt{Julia} tools, it was possible to run tests for 1--5D. The largest simulation run contained $1.5\cdot 10^6$ points. Due to the computational complexity for 5D only 5--18 points per dimension have been considered. One can see in \cref{fig:chebyshevQR_HGF_tensor_scalability} that even though the error slightly increases with the dimension, the rate of decay of the error is the same for all dimensions. 

Note that the underlying RBF expansion used in Chebyshev-QR could be used in a similar fashion to construct a tensor based algorithm. However, the restriction to the unit domains still holds.

In order to demonstrate the potential for the HagedornGF expansion for anisotropic basis functions, we consider  the following function,
\begin{equation}
f_{\mathrm{a}}(x, y) = \cos\left(\frac{(x+y)^2}{2.88} + \frac{(y-x)^2}{4.5}\right), \quad x, y \in [-1, 1]
\end{equation}
This is an anisotropic modification of the two dimensional function $f_3$ used for the tests earlier. We expect that anisotropic interpolation should suit better in this case than a regular HermiteGF-tensor. 
For testing purposes only matrices $E$ of the following form were considered,
\begin{equation}
 E = \begin{pmatrix}
      \epsilon^2 & \xi^2\\
      \xi^2 & \epsilon^2
     \end{pmatrix}, \quad \xi < \epsilon < 1.
\end{equation}
Indeed, as one can see in \cref{fig:anisotropic_example} there exists a matrix $E$ for which the error is smaller than for the HermiteGF interpolants with equal values of $\epsilon$ in both directions.
\begin{figure}[t]
\centering
\includegraphics[scale=0.42]{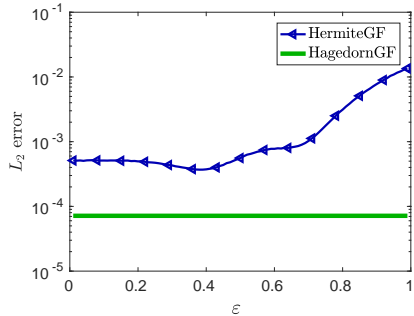}

\caption{Anisotropic interpolation of the function $f_{\mathrm{a}}$ for the positive-definite matrix $E = \left(\protect \begin{smallmatrix}
\epsilon^2 & \xi^2 \\
\xi^2 & \epsilon^2
\protect\end{smallmatrix}\right)$ with $\epsilon = 0.4$, $\xi=0.2$ on 121 Halton nodes in 2D. The result is better than all HermiteGF interpolants with the same values of the shape parameter in both directions. The error has been evaluated at 289($=17^2$) uniformly distributed evaluation points.}
\label{fig:anisotropic_example}
\end{figure}

\section{Performance tests}\label{sec:performance}
To assess the computational complexity and the performance of our HermiteGF-tensor code, we report here the run times of the \texttt{Julia} code.  Compared to \texttt{MATLAB}, \texttt{Julia} is faster for 3--5D, while the performance difference is negligible in 1--2D. The experiments where performed on the DRACO cluster of the Max Planck society. A DRACO node is equipped with \texttt{Intel 'Haswell' Xeon E5-2698v3} processors with 32 cores @2.3 GHz and 128 GB of memory. The parameters of the basis functions are set to $\varepsilon=0.1$ and $\gamma=3$. The number of evaluation points per dimension was fixed to $N_{\mathrm{eval}}=53$ in all tests.
In a first test, we split the timings to the following three essential parts of the algorithm:
\begin{itemize}
 \item Forming the interpolation matrix $\tilde{H}(X^{\col})$ and the evaluation matrix $\tilde{H}(X^{\mathrm{eval}})$;
 \item Inverting the interpolation matrix;
 \item Evaluating the interpolant $s$ at the evaluation points.
\end{itemize}
The timings for the first two tasks are shown in \cref{fig:timing_small_parts} as a function of the problem dimension for $N_{\textrm{col}}=20$ collocation points per dimension. Due to the tensor formulation of the algorithm  these first two parts do not impose significant costs. Indeed, we only need to evaluate and invert small one dimensional matrices. The costs grow linearly in the dimensionality, since we have two evaluations and one inversion of one dimensional matrices per dimension. The evaluation of the interpolant $s$, on the contrary, gets exponentially more expensive with increase of the dimensionality. That is due to the fact that we need to evaluate our interpolant in every point of the multidimensional tensor grid and the domain size grows exponentially with the dimension if we keep the amount of points per dimension constant. This can be seen from the run times reported in \cref{fig:timing_1-5D} for the total simulation times which show an exponential increase in the problem dimension. Note that the total CPU time reported in \cref{fig:timing_1-5D} steems from serial simulations in 1--3D and from parallel runs on 32 nodes for 4 and 5D. In order to minimize the influence of disturbances, we have run all serial simulations 100 times and report the minimum time. For the parallel runs, the disturbances are negligible.

As for the wall clock time, in 1--3D dimensions with moderately low amount of points (up to 35 per dimension) the interpolation can be run in less than a minute without parallelization (see \cref{fig:timing_1-5D}). For 4--5D, parallelization is required. The largest simulation ($18^5$ points in 5D) takes slightly less than a day on a full node of the DRACO cluster. 

\begin{figure}[t]
 \centering
 \begin{subfigure}[t]{0.44\textwidth} 
\includegraphics[scale=0.42]{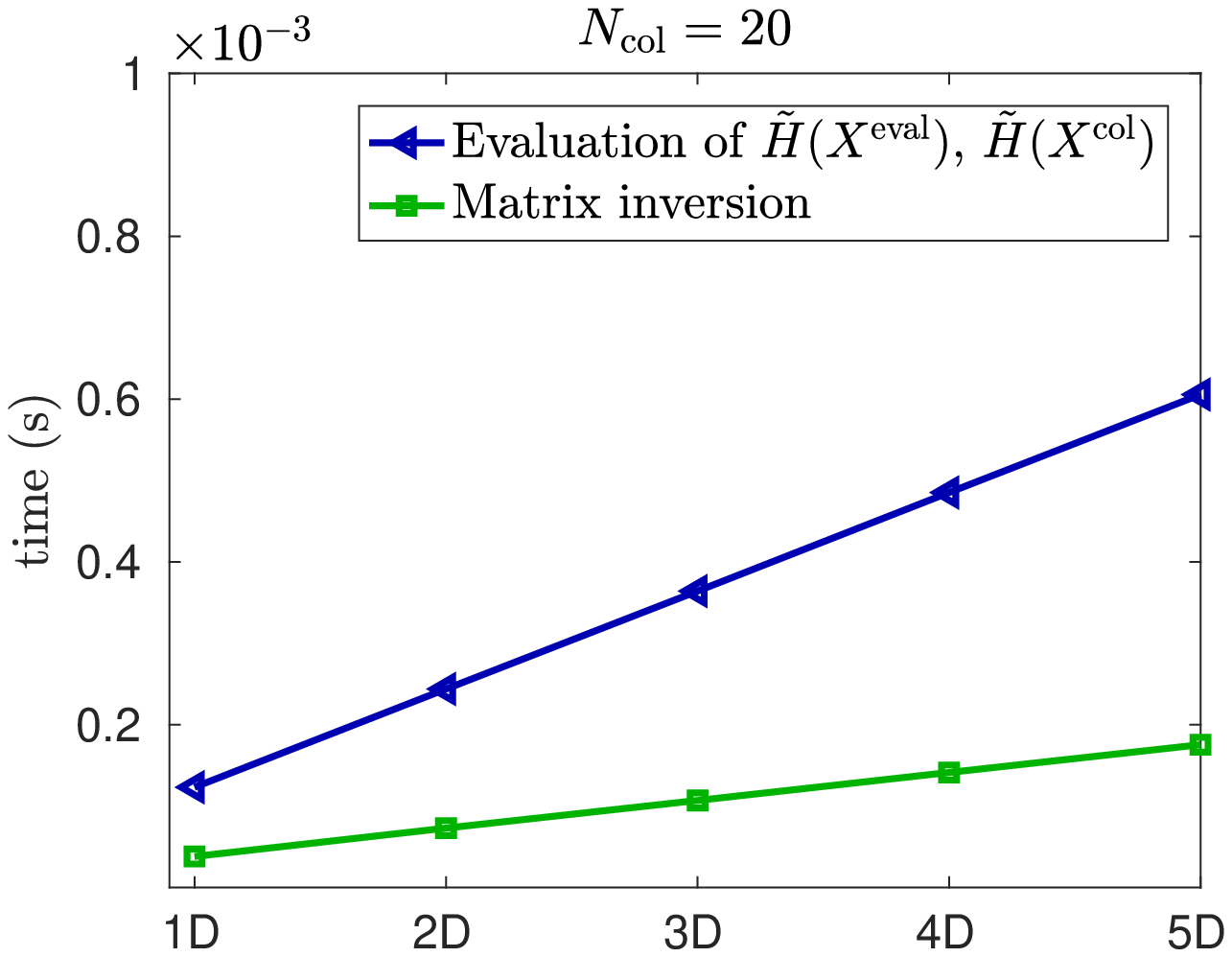}
\caption{Time of the matrices evaluation and inversion with 20 Chebyshev collocation points per dimension.}
\label{fig:timing_small_parts}
\end{subfigure} 
\begin{subfigure}[t]{0.44\textwidth} 
\includegraphics[scale=0.43]{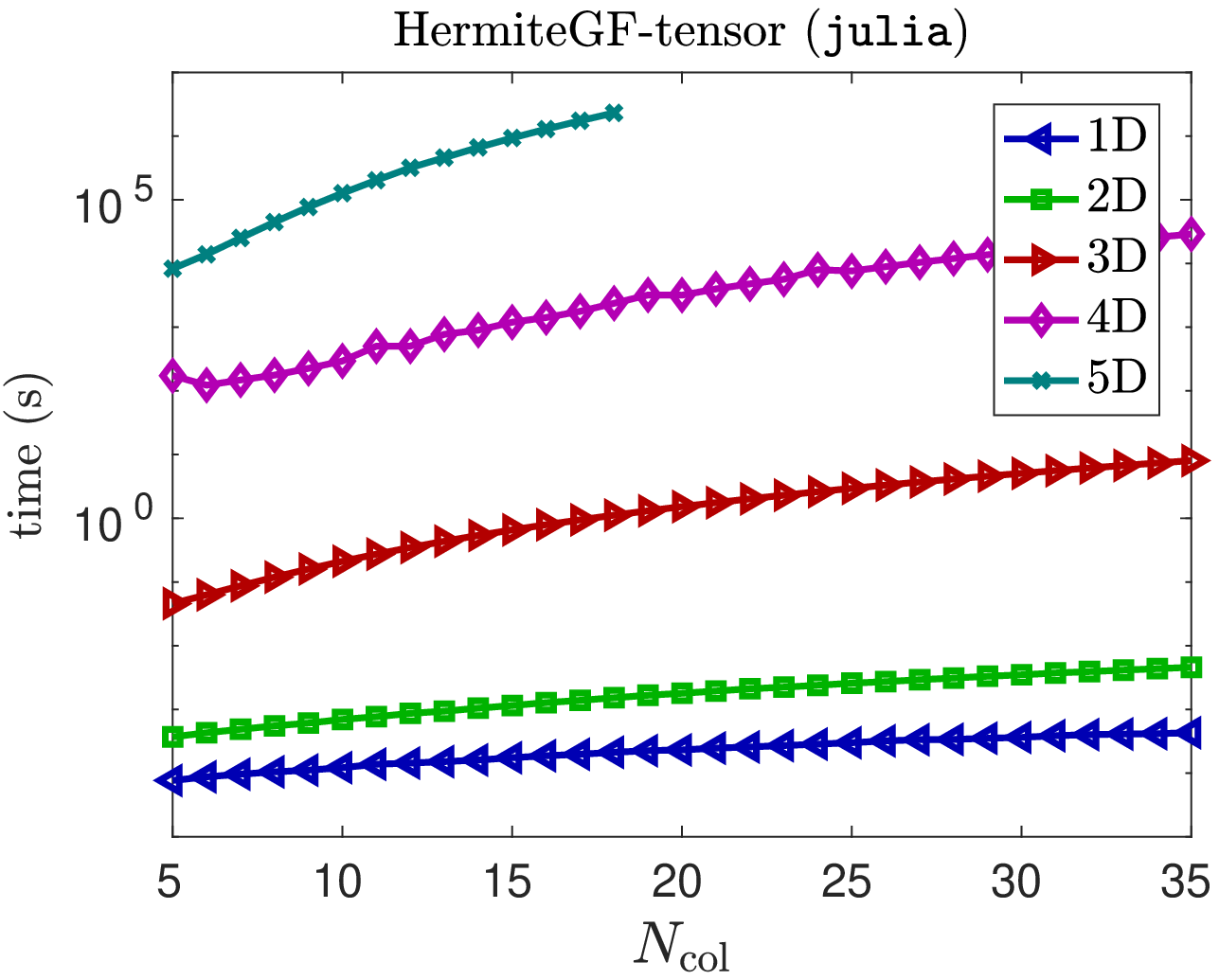}
\caption{Total CPU time.}
\label{fig:timing_1-5D}
\end{subfigure}
 \caption{The timings of the HermiteGF-tensor interpolation for 1--5D.}
 \label{fig:timings}
\end{figure}

\section{Conclusion}\label{sec:conclusions}
In this paper we derived a new stabilization algorithm for the RBF interpolation in the flat limit ($\eps \rightarrow 0$). The main idea of ``isolating'' the ill-conditioning in a special matrix is the same as in \cite{fornberg2007stable,fasshauer2012stable,fornberg2011stable}. On the other hand, we use a novel expansion of RBFs through Hermite polynomials based on the generating functions theory. Even though a standard RBF-QR approach is possible, we follow the road of choosing the number $M$ of expansion functions to be equal to $N$. This simplifies the algorithm greatly and enables an efficient implementation for up to millions of points in 5D. Compared to the existing RBF-QR stabilization methods (Chebyshev-QR and Gauss-QR) the 1D HermiteGF-based method features the same accuracy while having a simpler structure. The structure of the HermiteGF method is very similar to Gauss-QR,  however, the structure of the parameters $\eps, \gamma$ of basis functions is simpler: $\eps$ is the original shape parameter of the RBF basis and $\gamma$ stands for the size of the evaluation domain of the Hermite polynomials. The interpolation quality is not sensitive to the precise value of $\gamma$. 

Two ways to generalize the algorithm to the multivariate case were discussed. When tensor grids can be used, the HermiteGF-tensor method provides a very efficient embarassingly parallel solution. A similar solution could be also possible with the underlying RBF expansions of Chebyshev-QR and Gauss-QR algorithms. A combination with compression techniques as e.g.~proposed by Zhao \cite{Zhao16} will be explored in future work. As for the RBF-QR technique, we make a step forward by providing an opportunity for anisotropic approximations. 
The next steps in that direction is to develop an algorithm of choosing an optimal shape matrix $E$ and to use fast multipole methods to speed up the computation \cite{Yu16}.
 
The HermiteGF-tensor algorithm has been implemented both in \texttt{MATLAB} and \texttt{Julia}. The \texttt{MATLAB} code showed to be less sensitive to floating point arithmectics with large numbers. The \texttt{Julia} implementation, on the other hand, features more efficient computation. Moreover, the \texttt{Julia} built-in parallelization toolbox enabled an implementation of 5D interpolation with up to 18 points per dimension. With \texttt{Julia} being open source, it is possible to run it on any cluster. HermiteGF-tensor is currently the only available stable implementation of the RBF interpolation in the flat limit with millions of points. 

\section*{Acknowledgments}
The authors would like to thank Caroline Lasser (Technische Universit\"at M\"unchen) for constant support during the project. Fruitful discussions with Elisabeth Larsson (Uppsala University) are gratefully acknowledged. 

\providecommand{\bysame}{\leavevmode\hbox to3em{\hrulefill}\thinspace}
\providecommand{\MR}{\relax\ifhmode\unskip\space\fi MR }
\providecommand{\MRhref}[2]{%
  \href{http://www.ams.org/mathscinet-getitem?mr=#1}{#2}
}
\providecommand{\href}[2]{#2}

\end{document}